\documentclass[11pt, a4paper]{article}

\usepackage[utf8]{inputenc}
\usepackage[T1]{fontenc}

\usepackage{graphicx}
\usepackage{amsmath}
\usepackage{amsthm}
\usepackage{amsfonts}
\usepackage{xcolor}
\usepackage{amssymb}
\usepackage{tikz}
\usepackage[hyphens]{url}
\usepackage[hidelinks]{hyperref}
\usepackage[hyphenbreaks]{breakurl}
\usepackage{fullpage}
\usepackage{esvect,mathtools}
\usepackage{subfiles}
\usepackage[normalem]{ulem}
\usepackage{enumitem}
\usepackage[capitalize]{cleveref}

\newtheorem{theorem}{Theorem}
\numberwithin{theorem}{section} 
\newtheorem{lemma}[theorem]{Lemma}
\newtheorem{proposition}[theorem]{Proposition}

\newtheorem{conjecture}[theorem]{Conjecture}
\newtheorem{claim}[theorem]{Claim}
\theoremstyle{remark}

\theoremstyle{definition}

\crefname{lemma}{lemma}{lemmas}
\Crefname{lemma}{Lemma}{Lemmas}

\def\cA{\mathcal{A}}
\def\cC{\mathcal{C}}

\def\cF{\mathcal{F}}

\def\cH{\mathcal{H}}

\def\cS{\mathcal{S}}

\def\bR{\mathbb{R}}

\def\Pb{\mathbb{P}}

\title{Spanning subhypergraphs with degree constraints}

\date{}

\author{Noga Alon \thanks{Department of Mathematics, Princeton University and Schools of Mathematics, Computer Science and AI, Tel Aviv University, Israel. Research supported in part by NSF grant DMS-2553988 and by AFOSR grant 26RT0135. {Email}: {\tt nalon@math.princeton.edu}}
\and Penny Haxell \thanks{Department of Combinatorics and Optimization, University of Waterloo, Waterloo ON, Canada. Research supported in part by NSERC. {Email:} {\tt pehaxell@uwaterloo.ca}}
\and Aleksa Milojevi\'c\thanks{Department of Mathematics, ETH, Z\"urich, Switzerland. Research supported in part by SNSF grant 200021-228014. {Email}: {\tt aleksa.milojevic@math.ethz.ch}} \and
Jacques Verstra\"ete\thanks{Department of Mathematics, University of California, San Diego, CA, USA. Research supported in part by NSF grant DMS-2347832. {Email:} {\tt jacques@ucsd.edu}}}

\begin{document}

\maketitle

\begin{abstract}
An old result of Tutte states that any $d$-regular graph contains a spanning subgraph in which every vertex has degree $k$ or $k+1$, for every $1\leq k\leq d$. We generalize this statement to hypergraphs, showing, for example, that every $3$-uniform $d$-regular hypergraph contains a subgraph in which all degrees are $k, k+1$ or $k+2$, for every $1\leq k\leq d$. This statement is best possible in the sense that the corresponding statement with only two allowed consecutive values is not true. We provide generalizations of this statement to higher uniformities and discuss several open problems. 
\end{abstract}

\section{Introduction}

Many classical problems in graph theory can be formulated as questions about finding a subgraph with prescribed degree conditions. For example, two basic theorems of graph theory, Hall's marriage theorem \cite{Hall35} and Tutte's matching theorem \cite{Tutte47}, give necessary and sufficient conditions for graphs to have a perfect matching, which is a spanning subgraph in which all degrees are one. If one asks the spanning subgraph to be 2-regular instead of 1-regular, then Petersen's theorem \cite{Petersen91} guarantees that such subgraphs can be found in all even-regular graphs.

The general framework for studying questions about subgraphs with prescribed degrees was introduced by Lov\'asz \cite{Lovasz70Valencies,Lovasz72}. Suppose that every vertex $v\in V(G)$ is given a list of allowed degrees $H(v)$. Under which conditions on the lists and on $G$ does there exist a subgraph $G'\subseteq G$ for which $\deg_{G'} v\in H(v)$ for all $v\in V(G)$? This general problem was studied in a variety of contexts, including \cite{AddarioBerryDalalReed08, BergeLasVergnas, Cornuejols88, ShiraziVerstraete08}.

Regular graphs have particularly nice properties in this context. For example, Hall's theorem shows that regular bipartite graphs always have perfect matchings, and Petersen's theorem shows that even-regular graphs always have $2$-factors. Moreover, as Tutte \cite{Tutte53} showed, any regular graph has a spanning subgraph in which all degrees are either $1$ or $2$. There is no hope of proving such statements without some regularity assumption, since the graph may be a star.

In this paper, we focus on a version of this problem for regular hypergraphs. The natural way to generalize Tutte's theorem is to ask for which values of $t$ and $r$ it is true that any regular $r$-uniform hypergraph $\cH$ contains a spanning subhypergraph all of whose degrees are in the set $\{1, 2, \dots, t\}$.

The first interesting case of this question concerns regular $3$-uniform hypergraphs, where the answer turns out to be $t=3$. This is tight, as the example of the Fano plane shows.

\begin{theorem}\label{thm:3-uniform}
Let $\cH$ be a nonempty regular $3$-uniform hypergraph. Then, there is a spanning subhypergraph $\cC\subseteq \cH$ such that $\deg_{\cC} v\in \{1, 2, 3\}$ for all $v\in V(\cH)$.
\end{theorem}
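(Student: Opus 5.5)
Let $\cH$ be $d$-regular on vertex set $V$, with $d\ge 1$. We want $\cC\subseteq\cH$ with every degree in $\{1,2,3\}$.

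The plan is to take $\cC$ inclusion-minimal among spanning subhypergraphs covering every vertex, and then show that minimality together with a fractional/LP argument caps degrees at $3$. The route is through fractional relaxation and extreme points. The constant vector $x_e=1/d$ for all $e\in\cH$ satisfies $\sum_{e\ni v}x_e=1$ for every $v$, so the polytope
\[P=\Bigl\{x\in[0,1]^{E(\cH)}:\ \sum_{e\ni v}x_e=1\ \ \forall v\in V\Bigr\}\]
is nonempty.

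1. Take a vertex (extreme point) $x^*$ of $P$ and let $S=\{e:x^*_e>0\}$ be its support. Extremality means the columns of the incidence matrix indexed by $S$ are linearly independent, so $|S|\le |V|$.
2. Every vertex is covered by $S$, since its constraint sums to $1$. Double counting gives $\sum_v \deg_S v = 3|S|\le 3|V|$, so degrees in $S$ are at most $3$ on average. That alone is not enough.
3. To get a pointwise bound I would change the LP: keep the equality constraints only as $\ge$ (covering), and add the upper constraints $\sum_{e\ni v} x_e\le 1$ as well. Then I would perturb the weights to push the support degrees down.

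The main obstacle is converting the average bound into a pointwise bound. My first attempt is iterative rounding in the Jain / Singh–Lau style for degree-bounded problems. Start from the feasible point $x\equiv 1/d$ of the system
\[\sum_{e\ni v}x_e\ge 1 \text{ for all } v, \qquad 0\le x\le 1.\]
Drop a vertex's covering constraint once it is already covered by an edge fixed at $1$. At an extreme point, the number of tight linearly independent constraints equals the number of fractional variables. A token-counting argument, with each edge giving tokens to its $3$ vertices, should show the following: either some $x_e$ is $0$ or $1$ (fix it and recurse), or some vertex has at most $3$ (fractional plus integral) incident support edges. In the latter case we relax that vertex's constraint, since any cover will then give it degree between $1$ and $3$ as long as we never delete all its edges.

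The delicate point is that relaxing a constraint may leave that vertex uncovered. So instead of dropping the constraint outright, I would replace it by the weaker condition "at least one of these $\le 3$ edges survives". I would then check that the counting still closes. Uniformity $3$ is where the bound $3$ comes from: each edge contributes $3$ tokens, and independence yields $|S_{\text{frac}}|\le \#\text{tight rows}$.

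As a fallback, I would use a purely combinatorial argument. Take a minimal cover $\cC$. Each edge $e\in\cC$ has a private vertex $p(e)$ of degree $1$ in $\cC$. Among minimal covers, choose one minimizing $\sum_v\binom{\deg_\cC v}{2}$. Suppose a vertex $v$ has $\deg_\cC v\ge 4$. Use regularity of $\cH$ via a Hall-type count on private vertices to find an alternating exchange, swapping an edge at $v$ for an edge of $\cH$ through its private vertex avoiding $v$. Regularity should make such exchanges available by comparing $d\cdot|\text{private set}|$ with the number of edges hitting high-degree vertices. This exchange step is the one I expect to be hardest to make rigorous.
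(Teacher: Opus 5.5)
Your proposal has a genuine gap: neither route establishes the pointwise bound $\deg_{\cC}v\le 3$, and the iterative-rounding route cannot do so as you have set it up.

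\begin{itemize}
\item \emph{No upper bounds.} Your main system has only the covering rows $\sum_{e\ni v}x_e\ge 1$ and the box $0\le x\le 1$, so nothing bounds degrees from above. For example, $x\equiv 1$ is an extreme point, giving $\cC=\cH$. A vertex with many support edges is never controlled.
\item \emph{The lower bound breaks.} With target value $1$, a vertex with at most $3$ active edges has fixed part $0$ and active sum $1$. Once its row is dropped, all of its edges may be rounded to $0$.
\item \emph{The patch relaxes nothing.} ``At least one of these edges survives'', written as a linear constraint, is just the original covering row, so the rank count does not close.
\item \emph{The charitable reading also fails.} Suppose you keep equalities $\sum_{e\ni v}x_e=1$ and drop only the upper half once $v$ has at most $3$ support edges. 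Tight unrelaxed vertices ($T_1$) then have at least $4$ fractional edges and tight relaxed ones ($T_2$) at least $2$. The count gives only $4|T_1|+2|T_2|\le 3(|T_1|+|T_2|)$, which is no contradiction.
\item \emph{The fallback is unproven.} The minimal-cover exchange argument is, as you say yourself, unproven at exactly the step that matters.
\end{itemize}

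The missing idea is to aim at the midpoint of $\{1,2,3\}$ and to keep equalities rather than coverings. This is how the paper gets Theorem~\ref{thm:3-uniform}, as the case $r=3$, $k=1$ of $g(r)\le 2r-3$.
\begin{itemize}
\item If $d\le 3$, take $\cC=\cH$. Otherwise start at $x_e=2/d\in(0,1)$ and maintain $\sum_{e\ni v}x_e=2$ for every unsafe $v$.
\item While $|E_a|>|V_u|$, move along a nonzero solution of the homogeneous system on active edges until some coordinate hits $0$ or $1$.
\item Declare $v$ safe once it has at most $2$ active edges. The active sum is then an integer in $(0,2)$, hence equal to $1$ (or there are no active edges and the degree is exactly $2$). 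So exactly one fixed edge at $v$ is chosen, and any choice on the remaining at most $2$ edges gives degree in $\{1,2,3\}$. Both bounds come for free, with no survival condition.
\item When $|E_a|=|V_u|$, each unsafe vertex has at least $3$ active edges and each active edge meets at most $3$ unsafe vertices. Double counting forces the residual to be $3$-regular and $3$-uniform.
\item Round each remaining $x_e$ at $1/2$. This moves each unsafe degree by at most $3/2$ from $2$, hence into $\{1,2,3\}$.
\end{itemize}
A threshold of $3$ active edges would make the count close with nothing left over, but it cannot protect the lower bound: with target $2$ and three active edges the fixed part can be $0$. Threshold $2$ together with this final rounding step is what works.
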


A natural way to extend the above theorem is to ask about higher uniformities. In this direction, for $r\geq 2$, we define $f(r)$ to be the smallest value of $t$ for which any nonempty regular $r$-uniform hypergraph contains a spanning subhypergraph with all degrees between $1$ and $t$. Tutte's theorem shows that $f(2)=2$ and Theorem~\ref{thm:3-uniform} shows that $f(3)=3$. The following result determines the asymptotics of $f(r)$ for growing $r$.

\begin{theorem}\label{thm:r-uniform}
Let $r\geq 2$ be an integer, and let $\cH$ be a nonempty regular $r$-uniform hypergraph. Then, there exists a spanning subhypergraph $\cC\subseteq \cH$ such that $1\leq \deg_{\cC} v\leq 100\log r$. In other words $f(r)\leq 100\log r$.
\end{theorem}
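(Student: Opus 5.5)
We plan a two-stage probabilistic argument: a random sparse subhypergraph, then a local repair of vertices left uncovered. Let $\cH$ be $d$-regular and $r$-uniform. If $d\le 100\log r$ we may take $\cC=\cH$, so assume $d$ is large. Fix $p=c\log r/d$ for a suitable constant $c$ (say $c=10$), and let $\cC_0$ contain each edge independently with probability $p$. Each vertex $v$ has $\deg_{\cC_0}v\sim\mathrm{Bin}(d,p)$, with mean $c\log r$. We call $v$ \emph{empty} if its degree is $0$, and \emph{overloaded} if its degree exceeds $L=50\log r$. Then $\Pb[v \text{ empty}]\le e^{-c\log r}=r^{-c}$, and by Chernoff $\Pb[v\text{ overloaded}]\le r^{-\Omega(c)}$ as well. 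Since $\cH$ may be huge and has no global structure, we cannot union bound. Instead we will use the Lov\'asz Local Lemma or, more robustly, a deterministic repair guided by local counts.

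The repair step runs as follows. For each empty vertex $v$, pick one edge $e_v\ni v$ and add it. For each overloaded vertex, delete edges, but only in a way that never empties another vertex. A cleaner formulation uses a "greedy cover then trim" scheme, as in the proof of Theorem~\ref{thm:3-uniform}. First take a minimal spanning cover $\cC_1\subseteq\cC_0\cup\{e_v\}$, meaning no edge can be removed without uncovering a vertex. In a minimal cover, every edge $e$ has a \emph{private} vertex, one whose degree in $\cC_1$ is exactly $1$. The obstacle is vertices of high degree in $\cC_1$. For a vertex $u$, the edges of $\cC_1$ through $u$ come from $\cC_0$ (few, by Chernoff) and from repair edges $e_v$ with $u\in e_v$. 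So we need to control how many empty vertices choose an edge through $u$.

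To make this local, we apply the Local Lemma to the bad events $A_u=\{\deg_{\cC_0}u>L/2\}$ and $B_u=\{$more than $L/2$ repair edges through $u\}$. If each empty $v$ chooses $e_v$ uniformly among its $d$ edges, then the expected number of empty vertices choosing an edge through $u$ is at most $\sum_{e\ni u}\sum_{v\in e} r^{-c}\cdot\frac1d\le r\cdot r^{-c}$. Hence $\Pb[B_u]\le \binom{\cdot}{}\ldots\le (r^{1-c})^{L/2}$ by a standard tail bound. Each event depends only on the edges within distance $2$ of $u$. The trouble is that this dependency degree is about $(dr)^2$, which is unbounded in $d$. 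This is the main obstacle: the Local Lemma requires probabilities smaller than $1/(dr)^2$, whereas our bounds depend only on $r$.

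We therefore first reduce $d$, by iterated halving before the random step. Pass to a subhypergraph in which all degrees lie in $[d'/2,2d']$ with $d'=\mathrm{poly}(r)$, via repeated random halving. At each round the Local Lemma applies, since the deviations $\sqrt{d\log d}$ have failure probability $d^{-\omega(1)}$ against dependency $d r$. This works as long as $d\gg r^{C}$, and the accumulated errors telescope, exactly as in degree-splitting arguments. With $d'=r^{C}$, the dependency degree is polynomial in $r$. Choosing $c$ large makes all the bad probabilities $r^{-\Omega(c)}$, so the Local Lemma applies. Hence some outcome has all vertices of degree at most $L$ after repair and at least $1$, which gives $f(r)\le100\log r$. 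The lower-degree bound of $1$ is guaranteed by the repair edges, and the constant $100$ absorbs $c$ and the Chernoff constants.
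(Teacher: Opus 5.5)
Your halving stage is sound, but the second stage rests on a false estimate. The bound $\Pb[B_u]\le (r^{1-c})^{L/2}$ treats the emptiness events of the vertices near $u$ as independent. In fact they are positively correlated and can even coincide.

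Here is an example, with $r$ even. Replace each vertex $x$ of a $d$-regular graph by a block $B_x$ of $r/2$ new vertices, and each edge $xy$ by the $r$-set $B_x\cup B_y$. This is a $d$-regular $r$-uniform hypergraph in which all vertices of a block lie in exactly the same edges, and your halving preserves this structure. A block is entirely empty in $\cC_0$ with probability roughly at least $r^{-2c}$. In that case each of its $r/2$ vertices adds its own repair edge, and every one of these contains the whole block. So for large $r$ one has $\Pb[B_u]\ge r^{-O(c)}$, not $r^{-\Theta(c\log r)}$.

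What holds in general is $\Pb[B_u]\le\Pb[\text{some vertex of an edge through $u$ is empty}]\le 2d'r\cdot r^{-c/2}$. This is useful only if $c$ is large compared with the exponent $C$ in $d'=r^C$ and with the exponent of the much larger dependency degree of the events $B_u$. Hence your assertion that $100$ ``absorbs'' $c$ is unsupported. Note also that the minimal-cover/private-vertex step is never used. Moreover, if you bound $\Pb[B_u]$ this way, avoiding all $B_u$ already forces every vertex to be non-empty, so no repair ever happens.

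That last observation is the fix: drop the repair. Once all degrees lie in $[d'/2,2d']$ with $d'=\mathrm{poly}(r)$, let the bad event at $v$ be $\deg_{\cC_0}v=0$ or $\deg_{\cC_0}v>L$. It is determined by the at most $2d'$ edges at $v$, so the dependency degree is at most $2d'r$. Its probability is at most $r^{-c/2}+r^{-\Omega(c)}$, so the Local Lemma applies with a moderate $c$. For instance, $d'\approx r$, $c=6$, $L=50\log r$ work for large $r$; small $r$ needs separate care. This step is essentially Lemma~\ref{lemma:low_uniformity} read in the primal.

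The genuine difference from the paper is therefore the degree reduction. For $d\ge 3r$, the paper brings all degrees into $\{r,\dots,3r-4\}$ in one deterministic step, by applying the floating-variables bound $g(r)\le 2r-3$ with $k=r$. You instead use iterated random halving with the Local Lemma, and this is valid. At a round with current degree $D$ the relative errors are of order $\sqrt{\log(Dr)/D}$; they sum geometrically, dominated by the last round, and the halving can even be continued down to degree $\Theta(\log r)$. Your route is self-contained and does not need Theorem~\ref{thm:r-uniform-general}. However, it is longer and purely probabilistic, and reaching the constant $100$ requires careful bookkeeping, which the paper's reduction avoids.
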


Planken and Ueckerdt \cite{PlankenUeckerdt23} have constructed examples which give $f(r)\geq \lfloor \log_2(r+1)\rfloor $, showing that Theorem~\ref{thm:r-uniform} is tight up to a constant factor.

Further, note that we always have $f(r+1)\geq f(r)$. To see this, suppose that $\cH$ is an $r$-uniform hypergraph without a spanning subhypergraph all of whose degrees are in $\{1, \dots, t\}$, for some $t$. Then, one can construct an $(r+1)$-uniform hypergraph $\cH'$ with the same property. This can be done as follows: denote $S=V(\cH)$ and add a new vertex outside $S$ to each edge of $\cH$. Then, add further vertices and edges disjoint from $S$ until one obtains a regular $(r+1)$-uniform hypergraph $\cH'$. If $\cH'$ has a spanning subhypergraph all of whose degrees are between $1$ and $t$, then intersecting the edges of this hypergraph with $S$ would give a subhypergraph of $\cH$ with the same property, which we assumed did not exist.

\medskip

One can also attempt to generalize Theorem~\ref{thm:3-uniform} in a different direction. To explain how, let us go back to graphs for a moment, where we have already mentioned that any regular graph contains a $\{1, 2\}$-factor. An even stronger statement is true: Tutte showed that for any $d\geq k\geq 1$, any $d$-regular graph contains a $\{k, k+1\}$-factor (see also \cite{Thomassen81} for a very nice proof). Our next result shows that a similar statement is true for $3$-uniform hypergraphs.   

\begin{theorem}\label{thm:3-uniform-general}
Let $d\geq k\geq 1$ be integers, and let $\cH$ be a $d$-regular $3$-uniform hypergraph. Then, there is a spanning subhypergraph $\cC\subseteq \cH$ such that $\deg_{\cC} v\in \{k, k+1, k+2\}$ for all $v\in V(\cH)$. 
\end{theorem}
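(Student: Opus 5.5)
Let $\cH$ be $d$-regular and $3$-uniform on vertex set $V$, and fix $1\le k\le d$. The plan is to find a fractional solution and round it: give every edge the weight $x_e = k/d$, so that every vertex has fractional degree exactly $k$. We then look for an integral $\cC$ whose degrees stay within a bounded window around $k$, namely the window $\{k,k+1,k+2\}$. The natural tool is Lovász's theory of degree-constrained subgraphs, or equivalently an alternating-path / augmentation argument in the style of Thomassen's proof of Tutte's $\{k,k+1\}$-factor theorem. The approach I would commit to is the augmentation one.

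\textbf{Step 1: reduce to bipartite matchings with target intervals.} Form the bipartite incidence graph $B$ between $E(\cH)$ and $V$, in which each edge-node has degree $3$ and each vertex-node has degree $d$. Choosing $\cC$ means choosing a set of edge-nodes $F$, and each chosen edge-node contributes to all three of its vertices. This is not a flow problem, because an edge is taken as a whole. So instead I would argue directly.

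\textbf{Step 2: extremal choice.} Among all $\cC\subseteq\cH$ with $\deg_\cC v\le k+2$ for every $v$, pick one minimizing the deficiency $\sum_v \max(0,k-\deg_\cC v)$. Note that $\cC=\emptyset$ is feasible. Suppose some vertex $u$ has $\deg_\cC u<k$. Consider alternating trails starting at $u$: add an edge of $\cH\setminus\cC$ through the current vertex, move to one of its other vertices, remove an edge of $\cC$ through that vertex, and so on. A counting argument is needed to show that such a trail reaches a configuration that lowers the deficiency without creating a vertex of degree $k+3$. The counting goes as follows. Let $S$ be the set of vertices reachable from $u$ this way. In $S$, vertices blocked from gaining have $\cC$-degree at least $k+1$ (a vertex at $k+2$ blocks; a vertex at $k+1$ may absorb one more). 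Vertices blocked from losing have degree at most $k$. Double counting edges of $\cC$ and of $\cH\setminus\cC$ incident to $S$, combined with $d$-regularity, should give a contradiction: the edges of $\cH\setminus\cC$ at $S$ number about $(d-k)|S|$, while the room $\{k+1,k+2\}$ supplies the slack to absorb the fact that each edge hits up to three vertices.

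\textbf{Main obstacle.} The key difficulty is Step 2: an added edge raises two other vertices simultaneously, so one augmentation can overshoot at two places. This is exactly why the window must have width $3$ rather than $2$ (the Fano plane obstruction). I would try to handle it with a potential/discrepancy argument rather than paths. Start from a random-free rounding: by an iterated rounding of $x_e=k/d$ on the polytope $\{0\le x_e\le1,\ \sum_{e\ni v}x_e=k\}$, each basic solution has few fractional edges per vertex. A Beck--Fiala-type argument for column sparsity $3$ then shows that the integral rounding changes each vertex sum by strictly less than $3$. This gives degrees in $(k-3,k+3)$ and, by arranging the rounding to only round up at tight rows, in $\{k,k+1,k+2\}$.

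Concretely, in the iterative rounding I drop the constraint of a vertex once it has at most $2$ fractional edges left, since it then can only gain. Its fractional value is $k$, so the final degree lies between $k$ and $k+2$ provided the remaining edges are rounded up. The delicate point is to ensure that dropped vertices are never rounded down. I would do this by replacing dropped equality constraints with the inequality $\sum_{e\ni v}x_e\ge k$, keeping $x_e$ monotone nondecreasing on edges at dropped vertices. A rank count then shows that an extreme point always has either an integral edge or a vertex with at most $2$ fractional edges, since $3|F|\ge$ the count of fractional incidences $> 3\cdot(\#\text{tight rows})$. Verifying this rank count with the mixed equality/inequality constraints is where I expect the real work.
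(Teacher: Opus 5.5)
\textbf{Verdict.} Your final plan, iterative rounding that drops a vertex's constraint once it has at most $2$ fractional edges, is the right framework and is essentially the paper's. As written, however, it has two genuine gaps.

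\textbf{First gap: the target $k$ makes dropping one-sided.} Suppose a vertex still constrained by $\sum_{e\ni v}x_e=k$ is down to two fractional edges. Their values sum to an integer in $(0,2)$, hence to $1$. So only $k-1$ of its edges are fixed at $1$, and the vertex ends at degree $k-1$ unless one of those two edges is later rounded up. Neither remedy you propose enforces this inside the extreme-point argument.
\begin{itemize}
\item The inequality $\sum_{e\ni v}x_e\ge k$ is tight at the moment you switch to it. It therefore still counts as a tight row, and with only two fractional incidences it also invalidates your counting.
\item Requiring the $x_e$ at dropped vertices to be nondecreasing imposes sign conditions that a kernel vector of the remaining equality system need not satisfy, even after flipping its sign. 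So an admissible direction may not exist.
\end{itemize}
The fix is to start from $x_e=(k+1)/d$ (assuming $d\ge k+3$; otherwise take $\cC=\cH$) and to drop the constraint completely. A vertex that still has one or two active edges when it is dropped then has active sum exactly $1$, hence exactly $k$ edges fixed at $1$. Every outcome for its remaining edges gives degree in $\{k,k+1,k+2\}$, so no monotonicity is needed.

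\textbf{Second gap: the strict inequality in your rank count is false.} If every remaining constrained vertex has at least $3$ active edges, you only get $3|E_a|\ge\sum_{e\in E_a}|e\cap V_u|\ge 3|V_u|$, and equality genuinely occurs. For instance, take $\cH$ to be the Fano plane and $k=1$. The all-$\tfrac13$ vector is then the unique solution of the incidence system, since the incidence matrix $N$ satisfies $NN^{T}=2I+J$ and is nonsingular. So at the very first step there is no integral edge and no vertex to drop.

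The process therefore stalls exactly when the active edges form a $3$-regular $3$-uniform hypergraph on the unsafe vertices, and you need a separate terminal step. With target $k+1$, round each remaining $x_e$ at $1/2$. This moves every unsafe vertex's sum by at most $3/2<2$, so its integer degree stays in $\{k,k+1,k+2\}$.

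Your augmentation sketch in Step 2 ends at ``should give a contradiction'' and does not supply either missing piece.
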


If one tries to extend this result to higher uniformities, one encounters a significantly different situation from Theorem~\ref{thm:r-uniform}. To be precise, for $r\geq 2$, we define $g(r)$ to be the smallest value of $t$ such that for any $d\geq k\geq 1$ and any $d$-regular $r$-uniform hypergraph $\cH$, there exists a spanning subhypergraph $\cC\subseteq \cH$ such that for every vertex $v\in V(\cH)$ we have $\deg_{\cC} v\in \{k, k+1, \dots, k+t-1\}$. Again, Tutte's theorem \cite{Tutte78} and Theorem~\ref{thm:3-uniform-general} show that $g(2)=2$ and $g(3)=3$. Observe that, using the same argument as in the case of $f(r)$, one can show that $g$ is a nondecreasing function, i.e. that $g(r+1)\geq g(r)$ for all $r$. The following result does not describe the precise asymptotics of $g(r)$,
but shows that for large $r$ it is much larger than $f(r)$.

\begin{theorem}\label{thm:r-uniform-general}
For any $r\geq 3$, we have $\lfloor \sqrt{r/2}\rfloor\leq g(r)\leq 2r-3$.
\end{theorem}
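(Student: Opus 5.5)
Two directions. For the upper bound $g(r)\le 2r-3$, I would mimic the proof of Theorem~\ref{thm:3-uniform-general} (which gives $2\cdot 3-3=3$). For the lower bound, I would build explicit $d$-regular $r$-uniform hypergraphs in which every spanning subhypergraph has two vertices whose degrees differ by at least $\lfloor\sqrt{r/2}\rfloor$.

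\textbf{Upper bound.} Fix $d\ge k\ge1$ and take $\cC\subseteq\cH$ minimizing the potential
\[\Phi(\cC)=\sum_{v}\operatorname{dist}\bigl(\deg_\cC v,[k,k+2r-4]\bigr).\]
Start from a random subset of edges of density $p\approx (k+r-2)/d$, and aim for the midpoint of the window. Suppose some vertex $v$ lies outside the window, say $\deg_\cC v<k$. Grow an alternating structure from $v$: add an edge $e\notin\cC$ containing $v$. This pushes the up to $r-1$ other vertices of $e$ upward. Any of them that overshoots is repaired by removing an edge of $\cC$ through it, which in turn lowers $r-1$ further vertices, and so on.

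Regularity is used as in Tutte's graph argument via a counting/Hall-type condition. Let $A$ be the set of vertices reachable by such alternating sequences. Counting edges of $\cC$ and of $\cH\setminus\cC$ incident to $A$ gives $\sum_{u\in A}\deg_\cC u$ versus $\sum_{u\in A}(d-\deg_\cC u)$. Each edge meets $A$ in at most $r$ vertices but in at least one. So if no improving sequence exists, the degrees in $A$ are forced into a window of width about $2(r-2)$ around the average. This contradicts $v$ being below $k$, once the window has width $2r-3$.

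The main obstacle is this step: making the alternating-path argument work when one edge affects $r$ vertices simultaneously. The slack $r-2$ on each side is what absorbs the side effects on the other vertices of an edge. I would first check that the case $r=3$ reproduces the theorem's proof, and then argue by induction on the length of the alternating sequence.

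\textbf{Lower bound.} Put $s=\lfloor\sqrt{r/2}\rfloor$. I would generalise the Fano plane obstruction by building a regular hypergraph with vertex classes $X$ and $Y$ and the following properties:
\begin{itemize}
\item every edge meets $X$ in exactly $a$ vertices and $Y$ in exactly $b$ vertices, with $a+b=r$;
\item $|X|$ and $|Y|$ are chosen so that double counting the edges of any $\cC$ gives
\[a\sum_{x\in X}\deg_\cC x=|\cC|\,a\qquad\text{and}\qquad b\sum_{y\in Y}\deg_\cC y=|\cC|\,b,\]
which forces the average degrees on $X$ and $Y$ into a ratio $\lambda\neq1$.
\end{itemize}
If all degrees lie in $[k,k+t-1]$, the averages satisfy
\[\frac{k+t-1}{k}\ge\lambda .\]
Taking $\lambda=1+1/s$ and $k\approx s\cdot s$, the inequality forces $t\gtrsim s$. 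Integrality constraints, together with padding to make the hypergraph regular (the same trick used to show $g$ is monotone), cost the factor $\sqrt{r/2}$. I would then take $d$ large and choose $k$ appropriately in terms of $s$ to realise the contradiction.
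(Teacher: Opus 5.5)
Both directions have genuine gaps. For the upper bound, the step you flag as ``the main obstacle'' is the whole proof, and what you offer there does not work. Flipping one edge changes $r$ degrees at once, so your repair process branches into a tree rather than an alternating path, and nothing guarantees that it terminates or decreases $\Phi$. The counting is also too weak. The identity $\sum_{u\in A}\deg_\cC u=\sum_{e\in\cC}|e\cap A|$ with $1\le|e\cap A|\le r$ relates these sums to numbers of edges only up to a factor of $r$. You never say how the absence of improving sequences enters. Multiplicative information of this kind cannot produce an additive window of width $2r-3$ independent of $d$ and $k$. Mimicking ``the proof of Theorem~\ref{thm:3-uniform-general}'' is also circular, since the paper obtains that theorem as the case $r=3$ of this one.

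The missing idea is Beck--Fiala-style iterated rounding:
\begin{itemize}
\item Start from $x_e=(k+r-2)/d$.
\item Enforce $\sum_{e\ni v}x_e=k+r-2$ only at vertices with at least $r$ fractional edges.
\item Move along a kernel vector while there are more fractional edges than such constraints.
\end{itemize}
Once a vertex has at most $r-1$ fractional edges it is safe. Their sum is an integer, so the vertex has between $k$ and $k+r-3$ fixed edges of value $1$ (or exactly $k+r-2$ if none remain fractional). Hence any completion keeps its degree in $\{k,\dots,k+2r-4\}$. When the process stalls, a double count forces the residual hypergraph to be $r$-regular and $r$-uniform. Rounding at $1/2$ then moves each remaining degree by at most $r/2<r-1$.

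For the lower bound, the averaging obstruction you propose cannot exist. Suppose $\cH$ is $d$-regular and every edge meets $X$ in $a$ vertices and $Y$ in $b$ vertices. Then $d|X|=a|E(\cH)|$ and $d|Y|=b|E(\cH)|$. So for \emph{every} $\cC\subseteq\cH$, the average $\cC$-degree on $X$ is $a|\cC|/|X|=d|\cC|/|E(\cH)|=b|\cC|/|Y|$, which means $\lambda=1$ always. (Your displayed identity should read $\sum_{x\in X}\deg_\cC x=a|\cC|$.)

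More fundamentally, regularity gives the fractional solution $x_e=k/d$ with every fractional degree exactly $k$. So no argument that only averages degrees can rule out a window; the obstruction must come from integrality. Padding cannot help either. It never changes the degrees of the original vertices; it only transports an already regular counterexample to larger uniformity.

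What is needed is a discrepancy lower bound. The paper takes the dual of a projective plane of order $q$, with $q$ a prime in $[r/2,r-1]$. A spanning subhypergraph whose degrees all lie within $\lfloor\sqrt q/2\rfloor$ of $(q+1)/2$ would be a red/blue colouring of the points with discrepancy below $\sqrt q$ on every line. This contradicts Theorem~\ref{thm:discrepancy-lower-bound}. Monotonicity of $g$ then gives $g(r)\ge g(q+1)\ge\lfloor\sqrt{r/2}\rfloor$.
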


Note that the upper bound $g(r)\leq 2r-3$ immediately implies Theorems~\ref{thm:3-uniform} and~\ref{thm:3-uniform-general} by setting $r=3$. Thus, we will only focus on proving Theorems~\ref{thm:r-uniform} and~\ref{thm:r-uniform-general}. Our proofs of the upper bounds in the above theorems are algorithmic in nature, meaning that they provide an efficient procedure to find a subhypergraph with required degree conditions.

The paper \cite{GFL26} describing an AI-generated 
proof of the Koml\'os Conjecture was posted in the arXiv essentially simultaneously with our manuscript.
Corollary 1.2 in that paper implies that the
hereditary discrepancy of any set system in which every point belongs
to at most $r$ sets is smaller than $3\sqrt{2\pi r}$.
Combining it with the known relation between the linear discrepancy and
the hereditary discrepancy of set systems (see, e.g., \cite{AlonSpencer}, 
Theorem
13.3.2) this implies that the function $g(r)$
satisfies 

$$g(r) \leq 6\sqrt{2 \pi r}.$$ 

This improves the bound in
Theorem \ref{thm:r-uniform-general}
for all $r \geq 60$, and shows that for large
$r$, $g(r)=\Theta(\sqrt r)$. The proof in \cite{GFL26} does not 
provide an efficient algorithm.

One can also extend Theorem~\ref{thm:r-uniform-general} to arbitrary sets of allowed degrees. Namely, if $\cH$ is a $d$-regular $r$-uniform hypergraph and every vertex $v$ is assigned a list $H(v)$ of more than $\lceil \frac{r-1}{r}d\rceil$ allowed degrees between $0$ and $d$, then a simple modification of the Shirazi-Verstra\"ete argument from \cite{ShiraziVerstraete08} shows that there is a spanning subhypergraph $\cH'\subseteq \cH$ for which $\deg_{\cH'}v\in H(v)$ for all vertices $v\in V(\cH)$. This proof is based on the Combinatorial Nullstellensatz \cite{Alon99} and so it does not provide an efficient algorithm for finding this subhypergraph.

During the work on this paper we also found a proof of the weaker result that for $r=3$ one can find a subhypergraph with all degrees in $\{1, 2, 3, 4\}$, using a completely different argument that is ultimately based on topological methods. For completeness, we present this different perspective as well. 

\subsection{Related work}

A spanning subhypergraph $\cC\subseteq \cH$ in which all degrees are between $1$ and $t$ is closely related to the notion of $t$-shallow hitting sets introduced by Keszegh and P\'alv\"olgyi \cite{KeszeghPalvolgyi19}. They were initially interested in the polycolorability of geometric hypergraphs, where the question is to determine for a given $r$-uniform hypergraph $\cH$ (usually coming from a geometric setting) the maximum $k$ for which vertices of $\cH$ can be colored using $k$ colors such that every edge of $\cH$ receives all colors. A natural way to construct such a coloring is to find a set $S\subseteq V(\cH)$ which intersects every hyperedge in at least $1$ and at most $t$ elements. Setting these vertices aside as one color class, we are then left with a hypergraph in which all edges have size at least $r-t$, and if one can iterate this procedure, it is possible to obtain a coloring using $\lfloor r/t\rfloor$ colors in which every hyperedge contains all colors.

A set $S$ intersecting every hyperedge in at least $1$ and at most $t$ vertices is then called a \textit{$t$-shallow hitting set} (see \cite{PlankenUeckerdt23}). A natural dual notion is a set of edges $F\subseteq E(\cH)$ which covers every vertex at least once and at most $t$ times. We call such an $F$ a \textit{$t$-shallow hitting edge set}.

Planken and Ueckerdt \cite{PlankenUeckerdt23} have studied when $t$-shallow hitting edge sets exist in a variety of settings, including the setting of regular $r$-uniform hypergraphs. In particular, they have shown that $f(r)\geq \lfloor \log_2(r+1)\rfloor $ and $f(r)\leq O(r)$. Theorem~\ref{thm:r-uniform} improves the upper bound and shows that $f(r)\leq O(\log r)$, thus obtaining a bound which is tight up to a constant factor.

Shallow hitting edge sets seem to be useful for finding other spanning structures in dense hypergraphs, see e.g. \cite{LangSanhuezaMatamala23}.

\subsection{Further questions}

Let us now discuss some questions in similar spirit that we could not answer precisely. 

Firstly, it seems possible that the conclusion of Theorem~\ref{thm:3-uniform} could be strengthened if one restricts to tripartite hypergraphs (observe that the Fano plane, which shows the tightness of Theorem~\ref{thm:3-uniform}, is not tripartite). 

\begin{conjecture}\label{conj:tripartite}
Let $\cH$ be a nonempty regular tripartite $3$-uniform hypergraph. Then, $\cH$ contains a spanning subhypergraph whose vertices each have degree either $1$ or $2$.
\end{conjecture}

Although we are unable to show this statement in general, as partial evidence towards it we provide a simple argument based on Seymour's work \cite{Seymour74} that proves the conjecture for $3$-regular tripartite hypergraphs. 

Secondly, recall that Theorem~\ref{thm:3-uniform} shows that any regular $3$-uniform hypergraph has a spanning subgraph all of whose degrees are in $\{1, 2, 3\}$. What can be said about regular $4$-uniform hypergraphs? What is the value of $f(4)$? We know that $f(4)\leq 2\cdot 4-3=5$ from Theorem~\ref{thm:r-uniform-general} and $f(4)\geq f(3)=3$ by the monotonicity properties of $f$.

While we can say little about this question in general, we note that an old result of Thomassen shows that any $4$-regular $4$-uniform hypergraph has property B (see Theorem 5.1 in \cite{Thomassen92}). The dual of this statement says that the edge set of any $4$-regular $4$-uniform hypergraph $\cH$ can be partitioned into $E(\cH)=E_1\cup E_2$, where each vertex is covered between $1$ and $3$ times. In other words, there exists a spanning subhypergraph $\cC\subseteq \cH$ for which $\deg_{\cC} v\in \{1, 2, 3\}$. This suggests that we perhaps have $f(4)=3$, although this may be too specific a case to be an indicator of general conclusions.

\medskip
\noindent
\textbf{Paper organization.} We prove the upper bound from Theorem~\ref{thm:r-uniform-general} in Section~\ref{sec:upper_bounds}, and we note that this immediately implies Theorem~\ref{thm:3-uniform} and~\ref{thm:3-uniform-general}. We also prove Theorem~\ref{thm:r-uniform} in Section~\ref{sec:upper_bounds}. Then we present an argument proving Conjecture~\ref{conj:tripartite} for 3-regular hypergraphs in Section~\ref{sec:tripartite}. Our alternative argument for the weaker version of Theorem~\ref{thm:3-uniform} is given in the appendix.

\section{Proofs of Theorems~\ref{thm:r-uniform} and~\ref{thm:r-uniform-general}}\label{sec:upper_bounds}

In this section, we show that every $d$-regular $r$-uniform hypergraph contains a spanning subhypergraph with all degrees in the set $\{k, \dots, k+2r-4\}$, for any $k\leq d$. Let us now briefly outline the proof ideas needed for this result. Our proof of the upper bound $g(r)\leq 2r-3$ is inspired by the proof of the Beck-Fiala theorem \cite{BeckFiala81}, which states that if a set system $\cS$ on the ground set $X$ has the property that no element is contained in more than $n$ sets, then one can color the elements of $X$ red and blue so that the difference between the number of red and blue elements in any set $A\in \cS$ is at most $2n-1$. The proof is based on the floating-variables method, in which every element $v\in X$ is assigned a real number $x_v\in [-1, 1]$, with all numbers starting at $0$. Then, the variables are gradually changed with the goal of each of them reaching either $1$ (at which point we color the element red and do not change $x_v$ further) or $-1$ (at which point we color the element blue and do not change $x_v$ further). Throughout this process, one maintains $\sum_{v\in A} x_v=0$ for all $A\in \cS$, in order to ensure that the number of red and blue elements is balanced in each set. For the full proof of the Beck-Fiala theorem, see e.g. Section 13 of \cite{AlonSpencer}.

Our proof of Theorem~\ref{thm:r-uniform-general} proceeds along very similar lines, and we explain it for $3$-uniform hypergraphs for the sake of concreteness. The goal is to show that any $d$-regular hypergraph contains a spanning subhypergraph with degrees in the set $\{k, k+1, k+2\}$, if $d\geq k$. We begin by assigning a variable $x_e\in [0, 1]$ to every edge $e\in E(\cH)$, starting at $x_e=\frac{k+1}{d}$ (observe that if $d=k$ or $d=k+1$, then the statement is trivial, so we may assume $d\geq k+2$, so $0<x_e<1$). We proceed to move these values until one of them reaches either $0$ or $1$, at which point the edge is either excluded or included in our subhypergraph $\cC$ and the variable $x_e$ is not moved further. As before, throughout this process of moving the variables, we ensure that $\sum_{e\ni v}x_e=k+1$ for all vertices $v\in V(\cH)$.

As long as the number of free variables coming from edges with $x_e\in (0, 1)$ is larger than the number of constraints coming from vertices, we can find a direction to move the variables until one of them reaches either $0$ or $1$. Crucially, in order to reduce the number of constraints as the process progresses, we observe that any vertex with two nonfixed edges remaining will, no matter what values are assigned to these nonfixed edges, have between $k$ and $k+2$ incident edges at the end. Thus, as soon as all but two edges incident to a vertex are fixed, we can forget about the constraint corresponding to this vertex. As a consequence of a simple double counting, the only place where this process can get stuck is if we are left with a $3$-regular hypergraph, from which we can then choose the remaining edges directly by a simple procedure. Finally, Theorem~\ref{thm:r-uniform} can be derived from the above statement using a simple application of the Lov\'asz Local Lemma.

This method is quite robust, and it allows for the assumptions to be 
significantly relaxed. For example, it is sufficient to assume that 
the hypergraphs in question have rank $r$ instead of uniformity $r$ 
(in other words, we can allow for edges of cardinality smaller than $r$). 
Additionally, the only place where the regularity assumption is used 
is to construct a fractional spanning subgraph with all
fractional degrees $k+r-2$, which is an assignment of weights 
$w_e\in [0, 1]$ to all edges $e\in E(\cH)$ satisfying 
$\sum_{e\ni v} w_e=k+r-2$ for all $v\in V(\cH)$. 
Hence, Theorems~\ref{thm:3-uniform}--\ref{thm:r-uniform-general} apply
to all hypergraphs with such a fractional spanning subgraph, or
equivalently, all $r$-uniform hypergraphs with a
perfect fractional matching in which all weights are
at most $1/(k+r-2)$.

\begin{proof}[Proof of the upper bound in Theorem~\ref{thm:r-uniform-general}.]
Given $\cH$, finding a subhypergraph $\cC\subseteq \cH$ that witnesses $g(r)\leq 2r-3$ for $\cH$ corresponds to finding a vector of weights $x \in \{0, 1\}^{E(\cH)}$ such that for every vertex $v\in V(\cH)$ we have $\sum_{e\ni v} x_e\in \{k, \dots, k+2r-4\}$ (where $x_e=1$ means that the edge $e$ is included in the cover $\cC$ and $x_e=0$ means that it is not). Note that we may assume that $d\geq k+2r-3$, as otherwise taking $\cC=\cH$ already satisfies the needed conditions.

We find these values using the following algorithm. We start by setting $x_e=\frac{k+r-2}{d}$ for each $e\in E(\cH)$ and we will vary these variables until we arrive at a $\{0, 1\}$-valued vector satisfying $\sum_{e\ni v} x_e\in \{k, \dots, k+2r-4\}$ for each $v\in V(\cH)$.

The algorithm maintains a partition of the edge set into \textit{active} and \textit{fixed} edges, $E(\cH)=E_a\cup E_f$, and a partition of the vertex set into \textit{safe} and \textit{unsafe} vertices, $V(\cH)=V_s\cup V_u$.

The active edges will be the ones having $0<x_e<1$, and these values will be changed throughout the rest of the algorithm. On the other hand, when a variable $x_e$ reaches value $0$ or $1$, it becomes fixed and remains fixed for the rest of the process. If a vertex $v$ has at most $r-1$ active incident edges, then we declare $v$ to be \textit{safe} until the end of the algorithm.

Throughout the algorithm, we maintain the following set of constraints.
\[\sum_{e\ni v} x_e=k+r-2\text{ for every }v\in V_u\text{ and }0\leq x_e\leq 1\text{ for every }e\in E(\cH).\]

In a general step of the algorithm, as long as the number of active edges exceeds the number of vertex constraints, i.e. as long as $|E_a|>|V_u|$, we do the following. We find a solution to the system of equations \[\sum_{\substack{e\ni v\\e\in E_a}} y_e=0\text{ for every }v\in V_u.\]
If $|E_a|>|V_u|$, this system is underdetermined and so there exists a nontrivial solution $(y_e)_{e\in E_a}$ to this system. We then take this solution, define $y_e=0$ for all fixed edges and update the variables $x_e\mapsto x_e+\lambda y_e$ for some real number $\lambda$. Observe that the new values of $x_e$ still satisfy $\sum_{e\ni v} x_e=k+r-2$ for every $v\in V_u$. Moreover, to choose the value of $\lambda$, we start at $\lambda=0$ and increase it until one (or more) of the variables $x_e$ becomes $0$ or $1$. At this point, we fix $x_e$, we check if there is any vertex $v$ which becomes safe, and proceed to the next step of the algorithm. The algorithm terminates when $|E_a|=|V_u|$.

Let us make a couple of observations about this algorithm. First of all, we observe that if a vertex $v$ is declared to be safe, then no matter whether the remaining active edges incident to $v$ are included in the cover or not, $v$ will have degree between $k$ and $k+2r-4$ in the cover. To see this, we argue based on how many edges incident to $v$ have been fixed with $x_e=1$, at the moment when $v$ is declared to be safe. If all edges incident to $v$ are fixed when $v$ is declared safe, then $v$ will surely have degree $k+r-2$ in the final subhypergraph. So, suppose that this is not the case.

Since at least one and at most $r-1$ active edges are incident to $v$, we have $0<\sum_{\substack{e\ni v\\e\in E_a}} x_e<r-1$. Moreover, this quantity is an integer, and thus $\sum_{\substack{e\ni v\\e\in E_f}}x_e=k+r-2-\sum_{\substack{e\ni v\\e\in E_a}} x_e\in \{k, \dots, k+r-3\}$. In other words, between $k$ and $k+r-3$ fixed edges containing $v$ have $x_e=1$. Thus, no matter what happens with the remaining $r-1$ edges, $v$ will have degree between $k$ and $k+2r-4$ in the resulting subhypergraph.

Let $\cH'$ be the hypergraph of rank at most $r$ on the vertex set $V_u$, with edges given by $e\cap V_u$ for $e\in E_a$. Let us show that when the algorithm terminates with $|E_a|=|V_u|$, the hypergraph $\cH'$ is $r$-regular and $r$-uniform. Note that every vertex of $V_u$ is incident to at least $r$ edges of $E_a$, so the average degree is at least $r$. On the other hand, the average degree of $\cH'$ can be calculated as follows:
\[\bar{d}(\cH')=\frac{1}{|V_u|}\sum_{e\in E_a}|e\cap V_u|\leq \frac{r|E_a|}{|V_u|}\leq r.\]
Hence, we have the equality all the way through, meaning that each edge of $E_a$ is completely contained in $V_u$ and that $\cH'$ is an $r$-regular, $r$-uniform hypergraph.

So, when the algorithm terminates, we have obtained a partial subhypergraph $\cC_0$ consisting of those fixed edges having $x_e=1$. Let $f_v=\deg_{\cC_0} v$ denote the degree already assigned to the vertex $v$. It can be computed from the equation $f_v+\sum_{\substack{e\ni v\\e\in E_a}}x_e=k+r-2$. 

To complete the proof, we need to decide which of the remaining active edges we add to the set $\cC_0$ to form the final hypergraph $\cC$ having the property that $\deg_{\cC}v\in \{k, \dots, k+2r-4\}$. We do that as follows: if an edge has $x_e\geq 1/2$ when the above algorithm terminates, we include it in the final hypergraph $\cC$, and otherwise we leave it out.

The number of additional edges incident to $v$ differs from $\sum_{\substack{e\ni v\\e\in E_a}}x_e$ by at most $r/2$, since the rounding of every edge contributes at most $1/2$ to the difference. Hence, the degree of $v$ in the final hypergraph $\cC$ satisfies
\[|\deg_\cC v-(k+r-2)|\leq \Big|\deg_\cC v-f_v-\sum_{\substack{e\ni v\\e\in E_a}}x_e\Big|\leq r/2< r-1.\]
Since $\deg_\cC v$ is an integer, we conclude $\deg_{\cC} v \in \{k, \dots, k+2r-4\}$, thus completing the proof.
\end{proof}

We now discuss the proof of the lower bound in Theorem~\ref{thm:r-uniform-general}. It is a simple consequence of the following well-known lower bound construction in discrepancy theory (see e.g. Corollary 2.10 in \cite{BeckSos95}). 

\begin{theorem}\label{thm:discrepancy-lower-bound}
Let $\cH$ be a projective plane of order $q$, i.e. a $(q+1)$-uniform $(q+1)$-regular hypergraph on the ground set $[q^2+q+1]$ for which each pair of elements is contained together in exactly one set. Then, no matter how the vertices of $[q^2+q+1]$ are colored red and blue, there is an edge of $\cH$ which contains at least $\sqrt{q}$ more vertices of one color than the other. 
\end{theorem}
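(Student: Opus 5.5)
We prove the statement by the standard eigenvalue (spectral) argument for the incidence matrix of the projective plane. Let $n=q^2+q+1$ and let $A$ be the $n\times n$ incidence matrix, with rows indexed by lines and columns by points. Encode a red/blue colouring as a vector $x\in\{\pm1\}^n$. The imbalance on line $L$ is then $(Ax)_L$. It suffices to show that
\[\sum_L (Ax)_L^2 \;\ge\; q\,n .\]
Indeed, there are exactly $n$ lines, so some line must then satisfy $(Ax)_L^2\ge q$, that is, $|(Ax)_L|\ge\sqrt q$.

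The key computation is $A^{\top}A$. Its $(u,v)$ entry counts the lines through both points $u$ and $v$.
\begin{itemize}
\item For $u=v$ this is the number of lines through a point, namely $q+1$ by regularity.
\item For $u\neq v$ it is $1$, by the defining property of the projective plane.
\end{itemize}
Hence $A^{\top}A=qI+J$, where $J$ is the all-ones matrix. Therefore
\[\|Ax\|^2=x^{\top}(qI+J)x=q\|x\|^2+\Big(\sum_v x_v\Big)^2\ge q\|x\|^2=qn,\]
using $\|x\|^2=n$ for a $\pm1$ vector. The term $(\sum_v x_v)^2$ is nonnegative and is simply discarded, which finishes the argument.

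There is no real obstacle here. The only point needing care is the identity $A^{\top}A=qI+J$, which uses exactly the two stated properties: $(q+1)$-regularity and the fact that each pair of points lies on exactly one line. After that, averaging over the $n$ lines gives the bound $\sqrt q$ directly; no rounding issue arises, since the inequality is non-strict.
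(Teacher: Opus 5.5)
Your proof is correct. It is the standard eigenvalue argument: $A^{\top}A=qI+J$, then average $(Ax)_L^2$ over the lines. This is how the cited result (Corollary 2.10 of Beck and S\'os) is proved; the paper itself quotes that result without reproving it. The one step you leave unstated, that there are exactly $n$ lines, follows from double counting point--line incidences, since the hypergraph is both $(q+1)$-uniform and $(q+1)$-regular.
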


Recall that the \textit{dual} of a hypergraph $\cH$ is the hypergraph with vertex set $E(\cH)$ and edge set $\{\{e\ni v\}: v\in V(\cH)\}$. 

\begin{proof}[Proof of the lower bound in Theorem~\ref{thm:r-uniform-general}.]
Let $q$ be a prime between $r/2$ and $r-1$, and let $\cH$ be a projective plane of order $q$, which is a $(q+1)$-uniform $(q+1)$-regular hypergraph. By Theorem~\ref{thm:discrepancy-lower-bound}, no matter how one colors the vertices of $\cH$ red and blue, there always exists an edge in which there are at least $\sqrt{q}$ more red points than blue points (or vice versa). 

If $\cH'$ is the dual of the hypergraph $\cH$, note that its edges cannot be partitioned into two sets $E(\cH')=E_1\cup E_2$ such that each $\deg_{E_1} v\in \{(q+1)/2-\lfloor \sqrt{q}/2\rfloor, \dots, (q+1)/2+\lfloor \sqrt{q}/2\rfloor\}$ (otherwise, this would provide a balanced coloring of the vertices of the hypergraph $\cH$). This shows that $g(r)\geq g(q+1)\geq \lfloor \sqrt{q}\rfloor \geq \lfloor \sqrt{r/2}\rfloor$. 
\end{proof}

We close this section by giving the proof of Theorem~\ref{thm:r-uniform}, which states that regular $r$-uniform hypergraphs contain spanning subhypergraphs with all degrees between $1$ and $100\log r$. By passing to the dual, we see that Theorem~\ref{thm:r-uniform} is equivalent to the following statement. 
\begin{proposition}\label{prop:dual_problem}
If $\cF$ is an $r$-regular $d$-uniform hypergraph, then there exists a set of vertices $S\subseteq V(\cF)$ such that for any $e\in E(\cF)$ we have $1\leq |e\cap S|\leq 100\log r$.
\end{proposition}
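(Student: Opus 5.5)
The plan is to combine the discrepancy-type rounding from the upper bound in Theorem~\ref{thm:r-uniform-general} with a random sparsification analysed by the Lov\'asz Local Lemma. Write $\cF$ for the $r$-regular $d$-uniform hypergraph. If $d\le 100\log r$, take $S=V(\cF)$ and we are done. So assume $d$ is large compared to $\log r$.

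The main idea is to first reduce the uniformity. We find a subset $T\subseteq V(\cF)$ meeting every edge in roughly $C\log r$ vertices: at least $1$ and at most $100\log r$, with some slack. A single random subset with probability $p$ would need $pd\approx C\log r$ for every edge, but edges all have size $d$, so this is uniform in $p$. Choose each vertex independently with probability $p=c\log r/d$ for a suitable constant $c$, and let $A_e$ be the bad event that $|e\cap S|=0$ or $|e\cap S|>100\log r$. By Chernoff bounds, $\Pb(A_e)\le e^{-pd}+(e\mu/(100\log r))^{100\log r}\le r^{-c}+r^{-\Omega(100)}$ with $\mu=pd=c\log r$. For $c=3$, say, both terms are at most $r^{-3}/2$, roughly.

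Next comes the dependency count. The event $A_e$ is determined by the vertices of $e$. It depends only on events $A_{e'}$ with $e'\cap e\neq\emptyset$, and there are up to $d(r-1)$ of these. This is the main obstacle: $d$ can be arbitrarily large compared to $r$, so the Local Lemma does not apply directly.

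To fix this, I would first use the floating-variable argument from the upper bound in Theorem~\ref{thm:r-uniform-general}, in its dual form, to reduce $d$. Dually, the edges of $\cH$ correspond to vertices of $\cF$, and the theorem with parameter $k$ gives a set $S_1\subseteq V(\cF)$ with $|e\cap S_1|\in\{k,\dots,k+2r-4\}$ for every edge $e$. Choose $k=r$. Then every edge meets $S_1$ in between $r$ and $3r$ vertices. Restricting $\cF$ to $S_1$ gives a hypergraph whose vertex degrees are at most $r$ and whose edge sizes lie in $[r,3r]$. Since this hypergraph is no longer uniform, we sample each vertex of $S_1$ with probability $p=C\log r/r$. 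Then each edge has expected intersection between $C\log r$ and $3C\log r$. By Chernoff, the probability that the intersection is $0$ or exceeds $100\log r$ is at most $r^{-C}+2^{-100\log r}$, which is $\le r^{-5}$ for $C=10$; here we use natural logarithms and the fact that $100\ge 2e\cdot 3C$ allows the simple upper-tail bound. Each event now depends only on events of edges sharing a vertex of $S_1$, and there are at most $3r\cdot r=3r^2$ of them. Since $e\cdot r^{-5}(3r^2+1)<1$ for $r\ge 2$, the symmetric Local Lemma gives a set $S\subseteq S_1$ with $1\le|e\cap S|\le 100\log r$ for all $e$.

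Small values of $r$, where $\log r$ is tiny, need separate handling. For $r\le$ some constant, such as $r\le 20$, use Theorem~\ref{thm:r-uniform-general} directly with $k=1$. This gives intersections in $\{1,\dots,2r-3\}$, which is at most $100\log r$ for $2\le r$ in that range; the case $r=2$ can be checked by hand, since $1\le 100\log 2$. The delicate points are checking the constants and confirming that the dual application of Theorem~\ref{thm:r-uniform-general} with $k=r\le d$ is legitimate. If $d<r$ we can take $S_1=V(\cF)$, because then edges already have size $d<r$ and we are at the easy case $d\le 100\log r$ or can sample directly.
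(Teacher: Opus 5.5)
Your proposal is correct and follows essentially the paper's argument: apply the upper bound of Theorem~\ref{thm:r-uniform-general} dually with $k=r$ to shrink every edge to size between $r$ and $3r-4$ while vertex degrees stay $r$, then sample with probability $10\log r/r$ and use the symmetric Local Lemma with dependency degree $O(r^2)$, sampling directly when $d$ is small (the paper's cutoff is $d\le 3r$ via Lemma~\ref{lemma:low_uniformity}; yours is $d<r$). The only slips are numerical and easily repaired: for $C=10$ you have $2{\rm e}\cdot 3C\approx 163>100$, so replace $2^{-100\log r}$ by $({\rm e}\mu/t)^t\le(0.3{\rm e})^{100\log r}\le r^{-20}$ as the paper does; your small-$r$ cutoff must cover every $r$ with $10\log r>r$ (up to about $35$), which is harmless because $2r-3\le 100\log r$ there; and for $r=2$ you need Tutte's $\{1,2\}$-factor theorem, since Theorem~\ref{thm:r-uniform-general} only holds for $r\ge 3$.
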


We split the proof of this proposition into two parts: first, we discuss what happens when $d\leq O(r)$, in which an application of Lov\'asz's local lemma \cite{ErdosLovasz75} suffices to prove the statement. When $d$ is much larger than $r$, a simple application of Theorem~\ref{thm:r-uniform-general} allows us to reduce to the previous case. In the case $d\leq O(r)$, we prove the following statement.

\begin{lemma}\label{lemma:low_uniformity}
Let $\cF$ be an $r$-regular hypergraph, with all edges having size between $k$ and $3k$, for some $1\leq k\leq r$. Then there exists a set $S\subseteq V(\cF)$ such that $1\leq |e\cap S|\leq 100\log r$ for every $e\in E(\cF)$.
\end{lemma}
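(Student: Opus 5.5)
We prove Lemma~\ref{lemma:low_uniformity} by sampling a random set $S$ and applying the Lov\'asz Local Lemma. We include each vertex of $\cF$ in $S$ independently with probability $p=\frac{c\log r}{k}$ for a suitable constant $c$ (say $c=4$). If $p\geq 1$, i.e.\ $k\leq c\log r$, we simply take $S=V(\cF)$; then every edge meets $S$ in between $1$ and $3k\leq 3c\log r\leq 100\log r$ vertices. So assume $k>c\log r$, which gives $p<1$.

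For each edge $e$ let $A_e$ be the bad event that $|e\cap S|=0$ or $|e\cap S|>100\log r$. We bound the two parts of $A_e$ separately.
\begin{itemize}
\item Since $|e|\geq k$, we have $\Pb(e\cap S=\emptyset)\leq (1-p)^k\leq e^{-pk}=r^{-c}$.
\item The expectation satisfies $\mu=\mathbb{E}|e\cap S|\leq 3kp=3c\log r$. A Chernoff bound in the form $\Pb(X\geq t)\leq (e\mu/t)^t$, applied with $t=100\log r\geq 3e\mu$ when $c=4$, gives $\Pb(|e\cap S|>100\log r)\leq (1/3)^{100\log r}\leq r^{-100}$.
\end{itemize}
Hence $\Pb(A_e)\leq 2r^{-4}$, with room to spare; the constant $100$ is generous for this step.

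For the dependency graph, $A_e$ is mutually independent of all events $A_f$ with $f\cap e=\emptyset$. Since $\cF$ is $r$-regular and $|e|\leq 3k\leq 3r$, each $e$ meets at most $|e|(r-1)\leq 3r^2$ other edges. So the maximum dependency degree is $D\leq 3r^2$. The symmetric Local Lemma requires $e\,\Pb(A_e)(D+1)\leq 1$, which holds because $2e\,r^{-4}(3r^2+1)<1$ for $r\geq 3$. This yields a set $S$ avoiding every $A_e$, which is the desired set. Small $r$ such as $r=1,2$ need a separate check: there $\log r$ is tiny, and $100\log r$ could be below $1$ when $r=1$. In that range the claim should instead be handled directly, or by reading $\log$ so that the bound is at least $1$; for $r=2$, any edge of size at most $6\leq 100\log 2$ lets us take $S=V$ whenever $k\leq 2$.

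The main obstacle is the bookkeeping of constants: choosing $p$ so that the lower tail $r^{-c}$ beats the dependency degree of order $r^2$ (which forces $c>2$), while the upper tail threshold $3c\log r$ stays well below $100\log r$. It is precisely the condition that edge sizes lie within a factor $3$ of each other that makes a single $p$ work for both tails. Without it the dependency degree could blow up, which is why edges much larger than $r$ must be reduced using Theorem~\ref{thm:r-uniform-general} in the other case.
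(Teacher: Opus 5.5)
Your proposal is correct and follows essentially the same route as the paper. Both use a random set with $p=\Theta(\log r/k)$ (your $c=4$ against the paper's $10$), take $S=V(\cF)$ when $k=O(\log r)$, bound the lower tail by $(1-p)^k$, and bound the upper tail by $(e\mu/t)^t$, which is the same estimate the paper gets from the union bound $\binom{3k}{t}p^t$. Both then finish with the symmetric Local Lemma and dependency degree $O(kr)$.

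There is one arithmetic slip. The inequality $2{\rm e}\,r^{-4}(3r^2+1)<1$ is false for $r=3,4$ and holds only for $r\ge 5$. This does no harm, because the random case requires $4\ln r<k\le r$, which forces $r\ge 9$. You should justify the Local Lemma condition that way rather than claim the inequality for all $r\ge 3$.
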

\begin{proof}
Let $S$ be a random set of vertices, obtained by including every $v\in V(\cF)$ independently with probability $\frac{10\log r}{k}$ (we may assume that $k\geq 10\log r$, as otherwise one may take $S=V(\cF)$). We aim to show that with positive probability one has $1\leq |e\cap S|\leq 100\log r$ for every $e\in E(\cF)$.

Our main goal will be to show that for every fixed hyperedge $e\in E(\cF)$ we have \[\Pb\big[1\leq |e\cap S|\leq 100 \log r\big]\geq 1- \frac{1}{10kr}.\] Suppose we have shown this. Then, a simple application of the local lemma suffices to complete the proof as follows. For every $e\in E(\cF)$, define the bad event $B_e$ as the complement of the event $1\leq |e\cap S|\leq 100\log r$. Since the event $B_e$ only depends on whether the vertices of $e$ are included in $S$ or not, it is independent of all $B_{e'}$ with $e\cap e'=\varnothing$. So, the maximum degree of the dependency graph is at most $3k\cdot r$, since $|e|\leq 3k$ and every vertex of $e$ is contained in at most $r$ other edges. Since ${\rm{e}}(3kr+1)\cdot \frac{1}{10kr}<1$, the local lemma implies that with positive probability no bad event $B_e$ occurs, showing that we have $1\leq |e\cap S|\leq 100\log r$ for every $e\in E(\cF)$.

So, it remains to bound $\Pb[1\leq |e\cap S|\leq 100\log r]$. To do this, note first that 
\[\Pb[e\cap S=\varnothing]\leq \Big(1-\frac{10\log r}{k}\Big)^k\leq {\rm{e}}^{-10\log r}=r^{-10}.\]
On the other hand, by the union bound, for any $t$ we have
\[\Pb\big[|e\cap S|\geq t\big]\leq \binom{3k}{t}\Big(\frac{10\log r}{k}\Big)^t\leq \Big(\frac{3{\rm{e}}k}{t}\Big)^t \Big(\frac{10\log r}{k}\Big)^t\leq  \Big(\frac{3{\rm{e}} \cdot 10\log r}{t}\Big)^t.\]
Putting $t=100\log r$ gives 
\[\Pb\big[|e\cap S|\geq 100\log r\big]\leq \Big(\frac{3{\rm{e}} \cdot 10\log r}{100\log r}\Big)^{100\log r}\leq 0.9^{100\log r}\leq r^{-10},\]
where we have used that $30{\rm{e}}/100<0.9$ and $0.9^{100}\leq e^{-10}$. By a simple union bound, we obtain that $\Pb[1\leq |e\cap S|\leq 100\log r]\geq 1-2r^{-10}\geq 1-\frac{1}{10kr}$, which completes the proof as we already explained.
\end{proof}

Let us now give the proof of Proposition~\ref{prop:dual_problem} (and thus of Theorem~\ref{thm:r-uniform}, which is its dual).

\begin{proof}[Proof of Theorem~\ref{thm:r-uniform} and Proposition~\ref{prop:dual_problem}.]
Let $\cH$ be the given $d$-regular $r$-uniform hypergraph and let $\cF$ be its dual. If we have $d\geq 3r$, then by Theorem~\ref{thm:r-uniform-general} applied with $k=r$ we find a spanning subhypergraph $\cC\subseteq \cH$ in which all degrees are in $\{r, \dots, 3r-4\}$. Taking its dual hypergraph $\cF'$ corresponds to restricting the set of vertices of $\cF$ such that all hyperedges have size between $r$ and $3r-4$. Hence, by Lemma~\ref{lemma:low_uniformity} applied with $k=r$ we find a subset $S\subseteq V(\cF')$ with the property that $1\leq |S\cap e|\leq 100\log r$ for all $e\in E(\cF')$. The set of hyperedges in $\cH$ dual to $S$ then gives a subhypergraph in which every vertex $v\in V(\cH)$ has degree between $1$ and $100\log r$.

In the case $d\leq 3r$ we argue similarly, by directly applying Lemma~\ref{lemma:low_uniformity} with $k=\lceil d/3\rceil$ to obtain a set $S$ of vertices for which $1\leq |S\cap e|\leq 100\log r$. As we noted above, considering its dual gives the subhypergraph we need in $\cH$.
\end{proof}

\section{Tripartite $3$-regular hypergraphs}\label{sec:tripartite}

In this section, we present an argument which proves Conjecture~\ref{conj:tripartite} for $3$-regular hypergraphs.

\begin{proposition}\label{prop:2-shallow}
Let $\cH$ be a $3$-regular tripartite $3$-uniform hypergraph. Then, $\cH$ has a spanning subhypergraph whose degrees are all either $1$ or $2$.
\end{proposition}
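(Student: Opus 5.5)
The plan is to pass to the dual and then use Seymour's theorem on minimal non-$2$-colourable hypergraphs. A spanning subhypergraph $\cC\subseteq\cH$ with all degrees in $\{1,2\}$ is the same as a red/blue colouring of $E(\cH)$ in which every vertex (which has degree $3$) sees both colours. In the dual $\cH^*$:
\begin{itemize}
\item the vertices are $E(\cH)$;
\item each $v\in V(\cH)$ gives the edge $\{e\ni v\}$;
\item $\cH^*$ is $3$-uniform and $3$-regular, with $|V(\cH^*)|=|E(\cH^*)|$.
\end{itemize}
So the statement is equivalent to $\cH^*$ having property B. The tripartiteness of $\cH$, with parts $A,B,C$, gives extra structure. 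The edges of $\cH^*$ split into three classes $\cA,\cB,\cC'$, coming from $A$, $B$ and $C$. Each class is a perfect partition of $V(\cH^*)$ into triples, and every vertex of $\cH^*$ lies in exactly one edge of each class.

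Suppose $\cH^*$ is not $2$-colourable, and take a minimal non-$2$-colourable subhypergraph $F$ with vertex set $V(F)$ spanned by its edges. Seymour's theorem \cite{Seymour74} says $|F|\ge |V(F)|$. On the other hand, double counting gives $3|F|=\sum_{x\in V(F)}\deg_F x\le 3|V(F)|$. Hence equality holds throughout: every vertex of $V(F)$ has all three of its edges in $F$. Therefore $F$ is a union of connected components of $\cH^*$, and we may assume $F$ is a whole component. A component of $\cH^*$ is the dual of a component of $\cH$, which is again $3$-regular and tripartite. So we reduce to the case where $\cH^*$ itself is connected and minimally non-$2$-colourable with $|E|=|V|$, and also $\cH^*$ still carries the three-class structure above.

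The remaining step, which I expect to be the main obstacle, is to rule out this extremal case using tripartiteness. Seymour's analysis of the equality case gives strong structure. For every vertex $x$ there is a colouring in which only one edge through $x$ is monochromatic. Moreover, the incidence matrix $M$ (square, since $|E|=|V|$) has the property that for each edge $f$, $\cH^*-f$ is $2$-colourable with the colouring essentially forced. My first attempt would be a parity argument over $\mathbb{F}_2$. Because each of $\cA,\cB,\cC'$ partitions the vertex set, the rows of $M$ indexed by $\cA$ sum to the all-ones vector, and the same holds for $\cB$ and $\cC'$. This gives two independent linear dependencies among the rows of $M$ over $\mathbb{F}_2$ (and also over $\bR$). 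In contrast, Seymour's equality case forces $M$ to be nonsingular: the minimal non-$2$-colourable hypergraphs with $|E|=|V|$ are exactly those whose incidence matrix is nonsingular in the appropriate sense. This is the property satisfied by the Fano plane, and the Fano plane is not tripartite.

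So the key step is to verify that Seymour's characterisation of the equality case indeed implies $\det M\neq 0$, or at least that no nontrivial real combination of edge-rows vanishes. Then the relation
\[\sum_{f\in\cA}\mathbf 1_f=\sum_{f\in\cB}\mathbf 1_f\]
yields the contradiction. If that route fails, the fallback is a direct argument. I would use the forced colouring of $\cH^*-f$ for some $f\in\cA$ and recolour along alternating structures within the classes $\cB$ and $\cC'$, which are perfect matchings of triples, to repair $f$.
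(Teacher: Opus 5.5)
Your route is the paper's: dualize, take an edge-minimal non-$2$-colourable $F\subseteq\cH^*$ whose edges cover $V(F)$, and combine $3|F|\le 3|V(F)|$ with $|F|\ge|V(F)|$. This forces $F$ to be $3$-regular, so each class restricts to a perfect matching of $V(F)$ and the incidence matrix $M$ is square. You then play nonsingularity of $M$ against the relation $\sum_{f\in\mathcal{A}}\mathbf{1}_f=\sum_{f\in\mathcal{B}}\mathbf{1}_f$. This is exactly Claim~\ref{claim:minimal} together with Lemma~\ref{lemma:property_B}, with the matrix transposed.

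However, the step you flag as the main obstacle is left unproved, and it is the heart of the argument. Nonsingularity of $M$ does not follow from the inequality $|F|\ge|V(F)|$ alone. Your justification, that minimal non-$2$-colourable hypergraphs with $|E|=|V|$ are \emph{exactly} those with nonsingular incidence matrix, is an overstatement: only the forward direction holds, and it needs proof.

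The proof comes from Seymour's argument itself. Suppose the vertex vectors $x_v\in\bR^{E(F)}$ (the columns of $M$) satisfy a nontrivial relation $\sum_v\alpha_v x_v=0$, i.e.\ $\sum_{v\in f}\alpha_v=0$ for every edge $f$. Colour $v$ red if $\alpha_v<0$ and blue if $\alpha_v>0$. Then any edge containing a coloured vertex contains both colours. The edges lying inside $U=\{v:\alpha_v=0\}$ form a proper subhypergraph, because some coloured vertex lies in an edge, and that edge is not inside $U$. By minimality these edges admit a proper $2$-colouring of $U$, and combining the two colourings gives a proper $2$-colouring of $F$, a contradiction.

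Hence the columns of $M$ are linearly independent over $\bR$. Since $M$ is square, its rows are independent too, which contradicts your real relation between the $\mathcal{A}$-rows and the $\mathcal{B}$-rows. The $\mathbb{F}_2$ relations play no role here, since the argument uses signs. With this paragraph inserted the proof is complete, and you can drop the remarks about the structure of the equality case and the recolouring fallback.
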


As in Section~\ref{sec:upper_bounds}, it is natural to consider the dual hypergraph $\cF$, whose vertex set is $E(\cH)$ and whose edges are given by the collections $\{e: e\ni v\}$ for each vertex $v\in V(\cH)$. Finding a spanning subhypergraph of $\cH$ whose degrees are all $1$ or $2$ corresponds to finding a subset of vertices $S\subseteq V(\cF)$ such that every hyperedge $e\in E(\cF)$ contains one or two of these vertices. In other words, it corresponds to finding a bicoloring of $V(\cF)$ where no edge of $\cF$ is monochromatic, which is saying precisely that $\cF$ has Property B.

To prove Proposition~\ref{prop:2-shallow}, we use a simple adaptation of Seymour's argument from \cite{Seymour74}, by exploiting the fact that $\cF$ has at least two (in fact, three) disjoint perfect matchings whenever $\cH$ is a tripartite hypergraph. The heart of the argument is the following claim.

\begin{claim}\label{claim:minimal}
Let $\cF$ be a hypergraph, and suppose that every proper subhypergraph of $\cF$ has property $B$. Let us assign a vector $x_v\in \bR^{|E(\cF)|}$ to each vertex of the hypergraph $\cF$, where $x_v(e)=1$ if $v\in e$ and $x_v(e)=0$ otherwise. If the vectors $x_v$ are linearly dependent, then $\cF$ has property $B$. In particular, if $|E(\cF)|<|V(\cF)|$, then $\cF$ has Property $B$. 
\end{claim}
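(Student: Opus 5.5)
Since the vectors $x_v$ are linearly dependent, we fix a nontrivial relation $\sum_{v\in V(\cF)}\lambda_v x_v=0$ with real coefficients $\lambda_v$, not all zero. Read coordinate by coordinate, this says that for every edge $e\in E(\cF)$ we have $\sum_{v\in e}\lambda_v=0$. Put $P=\{v:\lambda_v>0\}$, $N=\{v:\lambda_v<0\}$ and $Z=\{v:\lambda_v=0\}$. Both $P$ and $N$ are nonempty: some $\lambda_v$ is nonzero, and after replacing $\lambda$ by $-\lambda$ we may assume some $\lambda_v>0$. (We may take the relation to use only vertices lying in edges, since an isolated vertex has $x_v=0$ and can be discarded. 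The degenerate case of an isolated vertex is harmless anyway, as it imposes no constraint on the coloring.) The key observation is that if an edge $e$ meets $P\cup N$, then it meets \emph{both} $P$ and $N$, because its coefficients sum to zero.

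Now let $\cF_0$ be the subhypergraph consisting of the edges contained entirely in $Z$. Since $P\neq\varnothing$, some vertex of $P$ lies in an edge, and that edge is not in $\cF_0$. So $\cF_0$ is a proper subhypergraph of $\cF$, and by hypothesis it has property B. Fix a $2$-coloring of $Z$ (red/blue) in which no edge of $\cF_0$ is monochromatic, and extend it by coloring $P$ red and $N$ blue. We check every edge $e$ of $\cF$. If $e\subseteq Z$, then $e\in\cF_0$ and is not monochromatic. Otherwise $e$ meets $P\cup N$, so by the key observation it contains a vertex of $P$ and a vertex of $N$, hence both colors. Therefore $\cF$ has property B.

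For the final assertion, the vectors $x_v$ are $|V(\cF)|$ vectors in $\bR^{|E(\cF)|}$. If $|E(\cF)|<|V(\cF)|$, they are necessarily linearly dependent, and the first part applies.

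The only delicate point is making sure that $\cF_0$ is genuinely a \emph{proper} subhypergraph, so that the minimality hypothesis can be invoked. This is where the nontriviality of the relation, and hence the nonemptiness of $P$ and $N$, is used. The interpretation of "proper subhypergraph" also matters: edge-deletion suffices here, and if vertex-deletion is intended, restricting to $Z$ is again proper. Everything else is a direct check.
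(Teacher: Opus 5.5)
Your proposal is correct and is essentially the paper's proof. You color the positive and negative supports of a nontrivial dependency with opposite colors, and you $2$-color the zero set $Z$ using the hypothesis on the induced subhypergraph $\cF[Z]$, which is proper simply because $Z\neq V(\cF)$.

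The one thing to drop is the detour through isolated vertices. You cannot in general discard them from the relation, since the relation may be supported only on them. Moreover, under a purely edge-deletion reading of ``proper'' the claim is actually false: the Fano plane plus an isolated vertex is a counterexample. So the vertex-deletion reading you mention at the end is the one that is needed, and it is the one the paper uses.
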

\begin{proof}
If the vectors $x_v$ are linearly dependent, there exists a linear dependency $\sum_{v} \alpha_v x_v=0$ with not all $\alpha_v$ equal to zero. In other words, for every edge $e\in E(\cF)$ we have $\sum_{v\in e} \alpha_v=0$.

Let $R=\{v:\alpha_v<0\}$ and $B=\{v:\alpha_v>0\}$, and color the vertices of $R$ red and vertices of $B$ blue. Note that some vertices may have $\alpha_v=0$ and thus remain uncolored. Since $\sum_{v\in e} \alpha_v=0$, any edge containing a red vertex also must contain a blue vertex. Let $U$ be the set of uncolored vertices, and let $\cF[U]$ be the induced subhypergraph on this set. By assumption, $\cF[U]$ can be bicolored without monochromatic edges. Extending the coloring using the sets $R, B$ gives a bicoloring of $\cF$ without monochromatic edges, as we wanted.
\end{proof}

\begin{lemma}\label{lemma:property_B}
Let $\cF$ be a $3$-uniform hypergraph of maximum degree $3$ with two edge-disjoint perfect matchings. Then, $\cF$ has Property $B$. 
\end{lemma}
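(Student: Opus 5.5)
We argue by minimal counterexample, following Seymour's approach. Suppose the lemma fails, and let $\cF$ be a counterexample minimizing $|E(\cF)|$. Deleting edges from $\cF$ can destroy the hypothesis of having two edge-disjoint perfect matchings, so minimality cannot be applied to proper subhypergraphs directly. Instead we apply it to \emph{property B} itself: we pass to a minimal non-2-colorable subhypergraph $\cF_0\subseteq\cF$, i.e.\ one in which every proper subhypergraph has property B, and aim to show that $\cF_0$ has property B. This contradicts its choice.

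By Claim~\ref{claim:minimal}, it suffices to show that the incidence vectors $x_v$ ($v\in V(\cF_0)$) are linearly dependent, or simply that $|E(\cF_0)|<|V(\cF_0)|$. Counting incidences in $\cF$ gives the following. Let $M_1,M_2$ be the two disjoint perfect matchings. Each has $|V(\cF)|/3$ edges. Since maximum degree is $3$, we have $|E(\cF)|\le |V(\cF)|$. A second counting uses the matchings: every vertex lies in exactly one edge of $M_1$ and one of $M_2$, hence in at most one further edge. So $|E(\cF)|-2|V|/3$ edges outside $M_1\cup M_2$ cover each vertex at most once, which again gives $|E|\le|V|$.

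The key step, and the one I expect to be the main obstacle, is handling the equality/tight cases and the passage to $\cF_0$. I would first try to exhibit the linear dependency directly on $\cF$: the vectors $\sum_{v} x_v$-type combinations. Concretely, the indicator of $M_1$ and of $M_2$ are two distinct solutions $y$ of $\sum_{v\in e}\cdots$ in the dual sense. That is, the incidence matrix $A$ (edges $\times$ vertices) satisfies $A^{T}\mathbf 1_{M_1}=\mathbf 1=A^T\mathbf 1_{M_2}$. Hence $A^T(\mathbf 1_{M_1}-\mathbf 1_{M_2})=0$, so the \emph{rows} (edge vectors) are dependent. Thus the rank of $A$ is less than $|E|$, which is at most $|V|$. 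So the column vectors $x_v$ have rank at most $|V|-1$ and are dependent. This is exactly the hypothesis of Claim~\ref{claim:minimal}.

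The remaining difficulty is that Claim~\ref{claim:minimal} requires every proper subhypergraph to have property B. So I would set up the minimal counterexample within the class of hypergraphs of maximum degree $3$, uniformity $3$ (allowing edges to be shrunk is bad; keep only edge deletions and vertex deletions). In a minimal non-B subhypergraph $\cF_0$, every vertex has degree at least $2$, because a degree-$\le1$ vertex can be recolored freely. I would then argue that $\cF_0$ still satisfies $\operatorname{rank}<|V(\cF_0)|$. If $\cF_0$ is a proper subhypergraph in which some vertex has degree $2$, then $3|E(\cF_0)|=\sum\deg<3|V(\cF_0)|$, so $|E(\cF_0)|<|V(\cF_0)|$ and the \emph{in particular} part of the claim applies. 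Otherwise $\cF_0$ is $3$-regular. Then $\cF_0$ is a union of connected components of $\cF$, and it inherits the matchings $M_1,M_2$ restricted to it; the row dependency above applies. In both cases Claim~\ref{claim:minimal} gives property B for $\cF_0$, a contradiction.
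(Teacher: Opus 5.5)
Your proposal is correct and is essentially the paper's proof. You pass to a minimal non-2-colorable $\cF_0\subseteq\cF$ and observe that either some vertex of $\cF_0$ has degree less than $3$ (so $|E(\cF_0)|<|V(\cF_0)|$), or $\cF_0$ is $3$-regular, hence a union of components of $\cF$ inheriting two distinct perfect matchings whose difference is a row dependency of the square incidence matrix; in either case Claim~\ref{claim:minimal} gives the contradiction.

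The only blemishes are cosmetic. The opening minimization over $|E(\cF)|$ and the degree-$\geq 2$ observation are never used. The word ``proper'' in your first case should be dropped so that the case split is exhaustive; the counting there does not need it.
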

\begin{proof}
Suppose that $\cF$ does not have Property $B$, and let $\cC\subseteq \cF$ be the minimal subgraph of $\cF$ which also does not have Property $B$. 

To start, let us show that $|E(\cC)|\leq |V(\cC)|$. Indeed, note that the average degree of $\cC$ is at most $3$, since this is the maximum degree of $\cC$. On the other hand, the average degree of $\cC$ is at least ${3|E(\cC)|}/{|V(\cC)|}$, since $\cC$ is a $3$-uniform hypergraph. Combining these two bounds gives $|E(\cC)|\leq |V(\cC)|$.

On the other hand, since $\cC$ is a minimal subhypergraph not having Property $B$, by Claim~\ref{claim:minimal}, we must have $|E(\cC)|\geq |V(\cC)|$. Hence, all of the inequalities of the previous paragraph must be equalities, and in particular all vertices of $\cC$ must have degree $3$ in $\cC$. This means that no edge contains vertices of both $\cC$ and $\cF\backslash \cC$, and so every perfect matching of $\cF$ restricts to a perfect matching of $\cC$. Hence, $\cC$ has two distinct perfect matchings.

This implies that the vectors $\{x_v:v\in V(\cC)\}$ cannot be linearly independent, as follows. Consider the vertex-edge incidence matrix $A$ of the hypergraph $\cC$, which has the vectors $x_v$ as rows. If the rows of this square matrix are linearly independent, so are the columns. In other words, if $y_e\in \bR^{|V(\cC)|}$ is the indicator vector of the edge $e$, the vectors $\{y_e:e\in E(\cC)\}$ are linearly independent.

However, this is not the case: if $M, M'$ are the perfect matchings of $\cC$, then $\sum_{e\in M} y_e=\sum_{e\in M'} y_e$ is a nontrivial linear dependence between the columns of $A$ (both sums give an all-ones vector since $M, M'$ are perfect). Hence, the rows of $A$ must also be linearly dependent, which is what we aimed to show. By Claim~\ref{claim:minimal} this means that $\cC$ has property B, contradiction.
\end{proof}

\begin{proof}[Proof of Proposition~\ref{prop:2-shallow}.]
To prove Proposition~\ref{prop:2-shallow}, it suffices to observe that the dual $\cF$ of a tripartite $3$-uniform hypergraph $\cH$ has three disjoint perfect matchings. Indeed, the vertices inside each of the three partite sets of $\cH$ are incident to distinct edges, and thus they correspond to a perfect matching in the dual. Hence, Proposition~\ref{prop:2-shallow} follows directly by dualizing Lemma~\ref{lemma:property_B}.
\end{proof}

\noindent
{\bf Acknowledgment.}
Part of the research on this project was done during a visit of the first and last authors at the Institute of Mathematical Sciences, National University of Singapore in 2026. We thank our hosts at the Institute for their hospitality.  

\medskip
\noindent
{\bf AI disclosure.} 
We have tried to use AI tools to resolve Conjecture 1.5. Although this attempt was ultimately unsuccessful, it pointed us to Seymour's argument from \cite{Seymour74}, which resolves the 3-regular case and which we decided to include in the paper.

\appendix

\section{An alternative proof of \texorpdfstring{$f(3)\leq 4$}{f(3) ≤ 4}}

Here we provide an alternative proof that regular hypergraphs contain shallow edge hitting sets. We first present an argument which gives a subhypergraph with maximum degree $5$, and then modify it to show that $5$ can be reduced to $4$.

This proof is reminiscent of the proof of Petersen's theorem on 2-factors of even-regular graphs. However, since we are working with hypergraphs instead of graphs, we substitute Hall's theorem with a consequence of the Aharoni--Haxell hypergraph matching theorem \cite{AharoniHaxell00}. Before we state it, let us recall that a $3$-uniform hypergraph $\cH$ is called \textit{bipartite} if there is a partition $V(\cH)=A\cup B$ such that each edge $e\in E(\cH)$ has $|e\cap A|=1$ and $|e\cap B|=2$. Also, let us recall that the link graph of a set $S$ is the graph $L_S$ on the vertex set $V(\cH)$ in which $xy$ is an edge if and only if there exists $s\in S$ such that $\{x, y, s\}\in E(\cH)$.
We denote by $\nu(G)$ the maximum size of a matching in a graph $G$, and by $\tau(G)$ the minimum size of a vertex set that intersects every edge of $G$.

\begin{theorem}[Corollary 1.2 in \cite{AharoniHaxell00}]\label{thm:hypergraph_matching}
Suppose that $\cH$ is a bipartite $3$-uniform hypergraph. If every set $S\subseteq A$ has $\nu(L_S)\geq 2|S|$, then there exists a matching $M$ in $\cH$ saturating $A$.
\end{theorem}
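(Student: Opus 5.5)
We will derive the statement from a topological version of Hall's theorem applied to the link graphs. For $a\in A$ write $L_a=L_{\{a\}}$, which is a graph on $B$. A matching in $\cH$ saturating $A$ is the same thing as a choice of edges $e_a\in L_a$, one for each $a\in A$, that are pairwise vertex-disjoint. Indeed, the triples $\{a\}\cup e_a$ are then disjoint, since every edge meets $A$ in exactly one vertex. So we need a disjoint system of representatives for the family of graphs $(L_a)_{a\in A}$. We encode disjointness by a simplicial complex: for a graph $G$, let $\mathcal{M}(G)$ be its matching complex, i.e.\ the complex whose simplices are the matchings of $G$. Let $\eta(K)$ denote the topological connectivity of a complex $K$, shifted by $2$ so that $\eta(K)\geq k$ means $\tilde H_i(K)=0$ for all $i\leq k-2$.

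The first ingredient is the topological Hall theorem (Aharoni--Haxell, Meshulam). Let $V_a=E(L_a)$ and let $K$ be the complex on $\bigcup_a V_a$ whose faces are the matchings of $\bigcup_a L_a$. If
\[
\eta\big(K[\textstyle\bigcup_{a\in S}V_a]\big)\geq |S| \quad\text{for every } S\subseteq A,
\]
then there is a face of $K$ containing one element from each $V_a$. We will either quote this or sketch its proof. The sketch uses Sperner's lemma: build a triangulation of the simplex on $A$, label it using the connectivity hypothesis skeleton by skeleton (this is where $\eta\geq |S|$ lets each map from the boundary of a face extend over the face), and a fully labelled cell then gives the rainbow face. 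Note that $K[\bigcup_{a\in S}V_a]=\mathcal{M}(L_S)$, since $L_S=\bigcup_{a\in S}L_a$.

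The second ingredient, and the real content, is the connectivity estimate
\[
\eta(\mathcal{M}(G))\geq \nu(G)/2 \quad\text{for every graph } G.
\]
Combined with the hypothesis $\nu(L_S)\geq 2|S|$, this gives $\eta(\mathcal{M}(L_S))\geq |S|$, which is exactly what the topological Hall theorem needs, and the theorem follows. To prove the estimate we will use Meshulam's edge-deletion/contraction inequality for independence complexes, applied to the line graph $\Lambda=L(G)$, so that $\mathcal{M}(G)=I(\Lambda)$. For an edge $xy$ of $\Lambda$ it states
\[
\eta(I(\Lambda))\geq \min\{\eta(I(\Lambda- xy)),\ \eta(I(\Lambda - N[x]\cup N[y]))+1\}.
\]
We then induct on the number of edges of $\Lambda$. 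Deleting an edge of $\Lambda$ does not decrease $\nu$ in the right sense, since every matching of $G$ is still an independent set after the deletion. Removing $N[x]\cup N[y]$ corresponds to deleting two edges $e,f$ of $G$ sharing a vertex, together with every edge of $G$ meeting $e\cup f$, i.e.\ deleting the at most $3$ vertices of $e\cup f$. This lowers the matching number by at most $2$, while the bound gains $+1$.

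The main obstacle is making this induction clean. The deletion branch leaves the world of line graphs, so the induction must be set up for a more general class of graphs carrying a matching-type parameter. The natural choice is the induction parameter $\gamma$, the independence domination number (or $\tilde\gamma$), using the known bound $\eta(I(\Lambda))\geq \gamma_{\text{ind}}(\Lambda)/2$. One then shows $\gamma_{\text{ind}}(L(G))\geq \nu(G)$ by a direct combinatorial argument. For the latter, the idea is that any independent set of edges of $G$ is dominated by a set of edges covering its at most $2\nu'$ endpoints. We will need to check that the constants come out as $\nu/2$ and not something weaker; this is the step where the argument could fail. If it does, we will fall back on Haxell's original combinatorial alternating-tree proof of the same corollary.
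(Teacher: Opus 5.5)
Your architecture (topological Hall plus a connectivity bound for matching complexes) is the standard route to this result; the paper itself just quotes it from Aharoni--Haxell. The gap is in the step you flag as the real content, $\eta(\mathcal{M}(G))\ge\nu(G)/2$, and the route you propose for it fails.

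\textbf{The domination patch is false.} It rests on $i\gamma(L(G))\ge\nu(G)$. Take $G=P_4$ with edges $ab,bc,cd$. In $L(G)$ the vertex $bc$ is adjacent to both other vertices, so it dominates every independent set. Hence $i\gamma(L(G))=1<2=\nu(G)$. Your justification (covering the endpoints of an independent set of edges) exhibits a dominating set, so it bounds the domination number from above, which is the wrong direction. In general one only has $i\gamma(L(G))\ge\lceil\nu(G)/2\rceil$, since an edge of $G$ meets at most two edges of a matching, and this is tight for disjoint copies of $P_4$. Combined with the bound $\eta(I(\Lambda))\ge i\gamma(\Lambda)/2$ that you quote (which is what the naive edge deletion/contraction induction gives), you get only $\eta(\mathcal{M}(L_S))\ge |S|/2$. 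That is half of what the topological Hall theorem needs. Saving this route would require the considerably deeper estimate $\eta(I(\Lambda))\ge i\gamma(\Lambda)$. Separately, in your contraction branch, deleting the three vertices of $e\cup f$ can lower $\nu$ by $3$, not $2$. An example is a path $uvw$ with a pendant edge at each of $u,v,w$; this is avoided only if one of $e,f$ is taken from a maximum matching. Falling back on a different published proof is not an argument.

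\textbf{The missing idea.} Split the complex at a vertex, not along an edge of the line graph; then you never leave matching complexes. For an edge $e=uv$ of $G$ one has $\mathcal{M}(G)=\mathcal{M}(G-e)\cup\big(e*\mathcal{M}(G-u-v)\big)$, and the two pieces meet in $\mathcal{M}(G-u-v)$. Mayer--Vietoris then gives $\eta(\mathcal{M}(G))\ge\min\{\eta(\mathcal{M}(G-e)),\,\eta(\mathcal{M}(G-u-v))+1\}$. Induct on $|E(G)|$:
\begin{itemize}
\item If $G$ is itself a matching, then $\mathcal{M}(G)$ is a simplex, or the empty complex when $\nu=0$, and the bound holds.
\item Otherwise choose $e$ outside a fixed maximum matching. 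Then $\nu(G-e)=\nu(G)$ and $\nu(G-u-v)\ge\nu(G)-2$, which yields $\eta(\mathcal{M}(G))\ge\nu(G)/2$.
\end{itemize}

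\textbf{A smaller repair in your setup.} The sets $V_a=E(L_a)$ may overlap, and then a face of $K$ meeting every $V_a$ need not give disjoint triples. For instance, if $L_a=L_{a'}=\{xy\}$, the single face $\{xy\}$ meets both sets, and your complex even satisfies the Hall-type hypothesis for $\{a,a'\}$, yet no matching saturates $\{a,a'\}$. Instead, take the edges of $\cH$ as the vertices of $K$, partitioned according to their vertex in $A$, with faces the sets of edges whose $B$-parts are pairwise disjoint. The induced subcomplexes are then matching complexes of multigraphs with matching number $\nu(L_S)$. The induction above goes through verbatim for multigraphs, since a parallel copy of a matching edge is itself an edge outside that maximum matching.
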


\begin{proposition}\label{prop:5-shallow}
If $\cH$ is a nonempty regular $3$-uniform hypergraph, then $\cH$ has a spanning subhypergraph with degrees in the set $\{1, 2, 3, 4, 5\}$.
\end{proposition}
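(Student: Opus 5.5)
We mimic Petersen's proof, with Hall's theorem replaced by Theorem~\ref{thm:hypergraph_matching}. Let $\cH$ be $d$-regular and $3$-uniform with $d\geq 1$. If $d\leq 5$ we take $\cC=\cH$, so assume $d\geq 6$. The goal is to find a subhypergraph $\cC$ whose degrees lie in $\{1,\dots,5\}$.

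First we build an auxiliary bipartite $3$-uniform hypergraph $\cB$. Let $A=V(\cH)$, and let $B=V(\cH)\times\{1,2\}$ consist of two copies of $V(\cH)$. For each edge $e=\{x,y,z\}$ of $\cH$ we add the three edges
\[\{x,(y,1),(z,2)\},\quad \{y,(z,1),(x,2)\},\quad \{z,(x,1),(y,2)\}.\]
These mimic the three "rotations" of $e$. A matching $M$ in $\cB$ saturating $A$ assigns to each vertex $v$ one edge $e_v\ni v$ of $\cH$, with $v$ playing the $A$-role in it. The $B$-copies of each vertex are used at most once each, so $v$ appears in the $B$-part of at most two chosen edges. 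Taking $\cC=\{e_v:v\in V(\cH)\}$ then gives $1\leq\deg_\cC v\leq 1+2=3$. That would be too strong, since it contradicts the tightness shown by the Fano plane, so the construction must be loosened.

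The loosened version works as follows. For each $v\in A$ we make $\lceil d/c\rceil$ copies, or equivalently give $B$ more copies of each vertex, so that a saturating matching is guaranteed to exist. We then choose the parameters so that the final degree bound is $1+4=5$. Concretely, we keep $A=V(\cH)$ but let $B$ contain four copies of each vertex. Then a saturating matching gives each $v$ at least one edge (its own $e_v$) and at most $1+4$ edges in total, since $v$ also appears in the $B$-part of at most four chosen edges. Note that $e_v$ might coincide for different $v$, which only lowers degrees and keeps them at least $1$.

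The main step, and the expected obstacle, is verifying the condition $\nu(L_S)\geq 2|S|$ of Theorem~\ref{thm:hypergraph_matching} for every $S\subseteq A$. Here $L_S$ is the graph on $B$ formed by the pairs coming from edges of $\cB$ through $S$. By regularity, each $s\in S$ contributes $\Theta(d)$ link pairs in $L_S$. Every vertex of $B$ lies in at most $O(d)$ link pairs of $L_S$, because each copy $(u,i)$ is paired only via edges of $\cH$ containing $u$. Hence $L_S$ has at least $c_1 d|S|$ edges and maximum degree at most $c_2 d$. Since $\tau\geq |E|/\Delta$ and $\nu\geq\tau/2$, we get $\nu(L_S)\geq \tfrac{c_1}{2c_2}|S|$. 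The work is to choose the copy structure, that is, how each edge of $\cH$ is spread over the copies of $B$, so that $c_1/(2c_2)\geq 2$. With four copies in $B$, each edge of $\cH$ should be distributed over all ordered choices of copies. This makes $L_S$ have $\geq c\,d|S|$ edges while each $B$-vertex has degree $\leq c\,d/8$-ish, which gives the ratio we need. This counting needs care because of edges of $\cH$ meeting $S$ in two or three vertices; bounding their contribution is the delicate point.

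The refinement from $5$ to $4$ should come from a sharper lower bound on $\nu(L_S)$. For that we would use a matching argument rather than the crude $\nu\geq\tau/2$, exploiting that $L_S$ is nearly regular and so has a matching of size close to $|E(L_S)|/\Delta$ by Vizing's theorem. This allows only three copies in $B$.
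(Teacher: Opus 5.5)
Your framework matches the paper's: one copy of $V(\cH)$ on the $A$-side, four copies on the other side, Theorem~\ref{thm:hypergraph_matching}, and a degree bound of $1+4$. However, the step that carries all the content, verifying $\nu(L_S)\ge 2|S|$, is left undone, and the tool you propose for it cannot do the job.

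With four copies, the non-$A$ side has $4n$ vertices, where $n=|V(\cH)|$. So for $S=A$ the condition demands a perfect matching of $L_A$. A graph on $4n$ vertices with maximum degree $\Delta$ has at most $2n\Delta$ edges. Hence the chain $\nu\ge\tau/2\ge |E(L_A)|/(2\Delta)$ can never certify more than $n$, whatever copy structure you choose.

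Concretely, suppose each rotation of an edge is attached to all pairs of copies $(i,j)\in[4]^2$. Then each $s\in S$ contributes $16d$ link pairs (counted with multiplicity), and each copy $(u,i)$ lies in up to $8d$ of them. That is $c\,d/2$, not ``$c\,d/8$-ish''. This gives $\tau(L_S)\ge 2|S|$ but only $\nu(L_S)\ge|S|$. To make the crude bound work you would need eight copies, i.e.\ degrees up to $9$. The point you flag as delicate, edges of $\cH$ meeting $S$ in two or three vertices, is not an issue once link pairs are counted with multiplicity.

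The missing idea is to arrange the copies so that $L_S$ is bipartite. Then K\H{o}nig's theorem gives $\nu(L_S)=\tau(L_S)$ with no factor-$2$ loss. The paper does this by splitting the four extra copies according to position in the rotation, $B=\{v^{(2)},v^{(4)}\}$ and $C=\{v^{(3)},v^{(5)}\}$. For each rotation $(x,y,z)$ of an edge it adds $\{x^{(1)},y^{(2)},z^{(3)}\}$ and $\{x^{(1)},y^{(4)},z^{(5)}\}$, which makes the auxiliary hypergraph tripartite. Then $L_S$ is a bipartite multigraph between $B$ and $C$ with $2d|S|$ edges and maximum degree $d$, so $\nu(L_S)=\tau(L_S)\ge 2|S|$.

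Your all-pairs construction could alternatively be rescued, but you would have to supply the argument. $L_S$ contains the $K_{4,4}$-blow-up of the shadow multigraph $\{yz:\{s,y,z\}\in E(\cH),\, s\in S\}$. That multigraph has $d|S|$ edges and maximum degree at most $2d$, so its fractional matching number is at least $|S|/2$. Half-integrality of basic fractional matchings (a matching plus disjoint odd cycles of weight $1/2$) then lets you lift this to $\nu(L_S)\ge 4\cdot|S|/2$.

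Finally, your proposed refinement to degree $4$ using three copies cannot go through Theorem~\ref{thm:hypergraph_matching} at all, since then $\nu(L_A)\le 3n/2<2n$. The paper obtains $4$ by a separate argument that deletes special edges.
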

\begin{proof}
Consider the following auxiliary hypergraph $\cA$. The vertex set of $\cA$ consists of five copies of $V(\cH)$, i.e. for each vertex $v\in V(\cH)$ we add the vertices $v^{(1)}, v^{(2)}, v^{(3)}, v^{(4)}, v^{(5)}$ to $V(\cA)$. The edge set of $\cA$ is defined as follows: for each edge $e=\{x, y, z\}\in E(\cH)$, we add to $E(\cA)$ the following six edges
\begin{center}
\begin{tabular}{ccc}
$\{x^{(1)}, y^{(2)}, z^{(3)}\}$, & $\{y^{(1)}, z^{(2)}, x^{(3)}\}$, & $\{z^{(1)}, x^{(2)}, y^{(3)}\}$,\\
$\{x^{(1)}, y^{(4)}, z^{(5)}\}$, & $\{y^{(1)}, z^{(4)}, x^{(5)}\}$, & $\{z^{(1)}, x^{(4)}, y^{(5)}\}$.
\end{tabular}
\end{center}

Note that the resulting hypergraph $\cA$ is tripartite, since every edge contains one vertex from each of the three sets $A=\{v^{(1)}:v\in V(\cH)\}$, $B=\{v^{(2)}, v^{(4)}:v\in V(\cH)\}$ and $C=\{v^{(3)}, v^{(5)}:v\in V(\cH)\}$. See the illustration in Figure~\ref{fig:auxiliary}. Further, since $\cH$ is $d$-regular, every vertex of $A$ has degree $2d$, while every vertex of $B$ and $C$ has degree $d$. 

\begin{figure}[hbpt!]
  \begin{center}
\begin{tikzpicture}[scale=1,
  smallbox/.style={draw, rounded corners=2pt, minimum width=1.55cm, minimum height=2cm, inner sep=0pt},
  bigbox/.style={draw, rounded corners=2pt, minimum width=1.55cm, minimum height=5.2cm, inner sep=0pt},
  vertex/.style={circle, fill=black, inner sep=0pt, minimum size=5pt},
  vertexlabel/.style={ inner sep=1pt},
  auxedge/.style={line width=0.85pt}
]
  \node[smallbox,label=above:{$A$}] at (0,0) {};
  \node[bigbox,label=above:{$B$}] at (3,0) {};
  \node[bigbox,label=above:{$C$}] at (6,0) {};

  \coordinate (x1) at (0,0.62);
  \coordinate (y1) at (0,0);
  \coordinate (z1) at (0,-0.62);

  \coordinate (x2) at (3,2.0);
  \coordinate (y2) at (3,1.2);
  \coordinate (z2) at (3,0.4);
  \coordinate (x3) at (6,1.75);
  \coordinate (y3) at (6,1.15);
  \coordinate (z3) at (6,0.55);

  \coordinate (x4) at (3,-0.4);
  \coordinate (y4) at (3,-1.2);
  \coordinate (z4) at (3,-2.0);
  \coordinate (x5) at (6,-0.55);
  \coordinate (y5) at (6,-1.15);
  \coordinate (z5) at (6,-1.75);

  \draw[auxedge] (x1) -- (y2) -- (z3);
  \draw[auxedge] (y1) -- (z2) -- (x3);
  \draw[auxedge] (z1) -- (x2) -- (y3);

 \draw[auxedge] (x1) -- (y4) -- (z5);
  \draw[auxedge] (y1) -- (z4) -- (x5);
  \draw[auxedge] (z1) -- (x4) -- (y5);

  \foreach \name in {x1,y1,z1,x2,y2,z2,x3,y3,z3,x4,y4,z4,x5,y5,z5}
    \node[vertex] at (\name) {};

  \node[vertexlabel,left=2pt] at (x1) {$x^{(1)}$};
  \node[vertexlabel,left=2pt] at (y1) {$y^{(1)}$};
  \node[vertexlabel,left=2pt] at (z1) {$z^{(1)}$};
  \node[vertexlabel,above=2pt] at (x2) {$x^{(2)}$};
  \node[vertexlabel,above=2pt] at (y2) {$y^{(2)}$};
  \node[vertexlabel,above=2pt] at (z2) {$z^{(2)}$};
  \node[vertexlabel,right=2pt] at (x3) {$x^{(3)}$};
  \node[vertexlabel,right=2pt] at (y3) {$y^{(3)}$};
  \node[vertexlabel,right=2pt] at (z3) {$z^{(3)}$};
  \node[vertexlabel,below=2pt] at (x4) {$x^{(4)}$};
  \node[vertexlabel,below=2pt] at (y4) {$y^{(4)}$};
  \node[vertexlabel,below=2pt] at (z4) {$z^{(4)}$};
  \node[vertexlabel,right=2pt] at (x5) {$x^{(5)}$};
  \node[vertexlabel,right=2pt] at (y5) {$y^{(5)}$};
  \node[vertexlabel,right=2pt] at (z5) {$z^{(5)}$};
\end{tikzpicture}
\end{center}

\caption{The auxiliary hypergraph $\cA$.}
\label{fig:auxiliary}
\end{figure}

The goal is to show that there exists a matching covering $A$ in this auxiliary hypergraph. To show this, note that for any $S\subseteq A$, the link graph $L_S$ is a bipartite multigraph between $B$ and $C$ with maximum degree $d$ and $2|S|d$ edges, since every vertex of $S$ contributes at least $2d$ edges to the link graph. Hence, the cover number of this bipartite graph must be at least $2|S|$, and so by K\H{o}nig's theorem we have $\nu(L_S)=\tau(L_S)\geq 2|S|$. By Theorem~\ref{thm:hypergraph_matching}, this shows that $\cA$ contains a matching $M$ saturating $A$.

In other words, for every $v\in V(\cH)$, there exists a unique edge of $M$ containing $v^{(1)}$, say $\{v^{(1)}, u^{(2)}, w^{(3)}\}$ (or $\{v^{(1)}, u^{(4)}, w^{(5)}\}$). We construct the subhypergraph $\cC$ by adding the edge $\{v, u, w\}$ for each $v$. We claim that every vertex appears in at most $5$ edges of $\cC$. Indeed, if a vertex $v$ appears in an edge of $\cC$, it must be that one of its copies in $\cA$ was present in one of the edges of the matching $M$. But there are $5$ copies of $v$ in $\cA$, and so $v$ can appear in at most $5$ edges of $\cC$. This shows that $\cC$ has maximum degree at most $5$, as we wanted. 
\end{proof}

\begin{proposition}\label{prop:4-shallow}
If $\cH$ is a nonempty regular $3$-uniform hypergraph, then $\cH$ has a spanning subhypergraph with degrees in the set $\{1, 2, 3, 4\}$.
\end{proposition}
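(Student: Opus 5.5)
We modify the auxiliary construction from the proof of Proposition~\ref{prop:5-shallow} so that each vertex of $\cH$ has only four copies in $\cA$ instead of five. Then any edge of $\cC$ containing $v$ must use one of these four copies in the matching $M$, which forces $\deg_\cC v\le 4$. The natural attempt takes copies $v^{(1)},\dots,v^{(4)}$ with the same parts as before: $A=\{v^{(1)}\}$, $B=\{v^{(2)},v^{(4)}\}$, and $C$ now containing only $v^{(3)}$. In the second group of three edges the roles of $v^{(4)}$ and $v^{(3)}$ change, so for $e=\{x,y,z\}$ we add
\[\{x^{(1)},y^{(2)},z^{(3)}\},\ \{y^{(1)},z^{(2)},x^{(3)}\},\ \{z^{(1)},x^{(2)},y^{(3)}\},\ \{x^{(1)},y^{(4)},z^{(3)}\},\ \{y^{(1)},z^{(4)},x^{(3)}\},\ \{z^{(1)},x^{(4)},y^{(3)}\}.\]
In this hypergraph $A$-vertices have degree $2d$, $B$-vertices have degree $d$, and $C$-vertices have degree $2d$. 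The link graph $L_S$ is then a bipartite multigraph between $B$ and $C$ with $2|S|d$ edges, but its maximum degree on the $C$ side is $2d$. So the K\H{o}nig edge-counting argument gives only $\tau(L_S)\ge |S|$, not the $2|S|$ required by Theorem~\ref{thm:hypergraph_matching}. Handling this is the main obstacle.

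\textbf{Step 1.} Lower-bound $\nu(L_S)$ using the structure of $L_S$ rather than edge counting alone. Each $x^{(1)}\in S$ contributes a link subgraph consisting of $d$ pairs $y^{(2)}z^{(3)}$ and $d$ pairs $y^{(4)}z^{(3)}$, one of each for every edge $e\ni x$. The $B$-endpoints are all distinct, since each $B$-vertex has degree exactly $d$ in $\cA$. A cover of $L_S$ by few vertices must therefore use many $C$-vertices, because $B$-vertices cover only $d$ link edges each.

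Suppose a cover uses $\beta$ vertices of $B$ and $\gamma$ vertices of $C$. Then $\beta d+2\gamma d\ge 2|S|d$. I would try to show that the $C$-vertices $z^{(3)}$ used in the cover can carry their full weight $2d$ only when the corresponding $B$-neighbours behave in a specific way. I would also try a weighted (fractional) K\H{o}nig argument: assign weight $1/d$ to each $B$-vertex and $1/(2d)$ to each $C$-vertex, and test whether Theorem~\ref{thm:hypergraph_matching}, or its deficiency version in \cite{AharoniHaxell00}, only needs $\nu(L_S)\ge 2|S|-$(something absorbed elsewhere).

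\textbf{Step 2, fallback if Step 1 fails.} Keep the five-copy construction and obtain a saturating matching $M$ as in Proposition~\ref{prop:5-shallow}. Let $\cC$ be the resulting subhypergraph, with all degrees in $\{1,\dots,5\}$. Then repair vertices of degree $5$. A vertex $v$ of degree $5$ in $\cC$ has every copy $v^{(1)},\dots,v^{(5)}$ covered by $M$. Its own chosen edge, the one through $v^{(1)}$, can be dropped whenever its other two vertices $u,w$ are covered elsewhere. I would set up alternating-path or exchange arguments on $M$, in Hall style, to show that one can always re-route so that no vertex uses all five copies. Concretely, I would require the matching to avoid covering $v^{(4)}$ and $v^{(5)}$ simultaneously when $v^{(2)}$ and $v^{(3)}$ are covered, and I would prove this is possible via a minimal-counterexample argument on $|M|$.

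I expect that choosing the right four-copy gadget, one for which the link-graph matching bound $2|S|$ actually holds, is the crux. The remaining steps are the same bookkeeping as in Proposition~\ref{prop:5-shallow}.
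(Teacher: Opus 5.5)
\textbf{There is a genuine gap: neither step is a proof.} Step 1 is a list of things to try, and Step 2 ends with an unspecified exchange argument.

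\textbf{Step 1 cannot work as planned.} In your four-copy gadget the class $C=\{v^{(3)}\}$ has exactly $|A|$ vertices. So for $S=A$ the bipartite link graph satisfies $\nu(L_A)\le |C|=|A|<2|A|$. The hypothesis of Theorem~\ref{thm:hypergraph_matching} therefore fails for every nonempty $\cH$. Even the sharper Aharoni--Haxell condition $\nu(L_S)>2(|S|-1)$ fails as soon as $|A|\ge 2$. No refinement of the cover counting or fractional K\H{o}nig weighting can fix this, because the obstruction is the cardinality of $C$ itself, not its degrees. Any $A$-saturating matching in this gadget would have to be perfect on $A\cup C$, and you would need a completely different tool to produce one.

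\textbf{Step 2 starts where the paper starts but lacks the idea that makes it work.} The paper takes the five-copy $\cC$ and deletes special edges; it never re-routes $M$. Your criterion (drop $e_v$ when its other two vertices are covered elsewhere) is not stable under several deletions. A degree-$5$ vertex $v$ lies in $e_v$ and in the special edges of four other vertices $u_1,\dots,u_4$. If those are degree-$5$ vertices whose special edges are also dropped, $v$ ends up uncovered. The alternating-path re-routing and the ``minimal counterexample on $|M|$'' are not specified, and they are not needed.

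The missing ingredient is an ordering:
\begin{itemize}
\item Let $D$ be the digraph on the current degree-$5$ vertices with $u\to v$ whenever $v\in e_u\setminus\{u\}$.
\item Each special edge has only two other vertices, so every out-degree in $D$ is at most $2$. Hence some $v$ has in-degree at most $2$.
\item Delete $e_v$ and repeat on the vertices that still have degree $5$. Degrees only decrease, so this set only shrinks, and $D$ remains an induced subgraph with out-degrees at most $2$.
\end{itemize}
Only special edges of current degree-$5$ vertices are ever deleted. So once $v$ is processed, among $e_{u_1},\dots,e_{u_4}$ only those of its at most two in-neighbours can still be removed. Hence $v$ keeps at least $5-1-2\ge 1$ edges. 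Any vertex whose special edge is never deleted stays covered by it. The process ends with all degrees in $\{1,2,3,4\}$.
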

\begin{proof}
We start from the subhypergraph $\cC$ obtained in Proposition~\ref{prop:5-shallow} and remove some edges to reduce its maximum degree by one. Note that we treat $\cC$ as a multihypergraph.

The main observation is that if a vertex $v$ has $\deg_{\cC}v=5$, then we may delete the edge containing $v^{(1)}$ in the matching $M$ (which we included in order to cover $v$) and reduce the degree of $v$ in this way. However, one must be careful in which order the vertices $v$ are chosen, in order to ensure that every vertex is still covered after several edges get deleted in this process.

We call the edge containing $v^{(1)}$ in the matching $M$ the \textit{special edge} of $v$, and we use the same term to refer to the corresponding edge in $\cC$.

Let $D$ be an auxiliary directed graph on the set of vertices of degree $5$. We put a directed edge in $D$ from $u$ to $v$ if $v$ is contained in the special edge of $u$ and $v\neq u$. The goal is to delete some special edges associated to the vertices of $D$, and we will use $D$ to guide the order in which we do this.

The maximum outdegree in $D$ is $2$, and hence there exists a vertex $v$ of indegree at most $2$. We now delete the special edge $e_v$ of the vertex $v$ from $\cC$. 

We claim that even though $e_v$ was deleted, $v$ will still be covered at the end of this procedure. This is simple to see: since $v$ had at most $2$ inedges, at most two additional edges covering $v$ will be deleted. Since $v$ had appeared in $5$ edges of $\cC$, even after all the deletions, $v$ will still be covered. 

We further claim that no other vertex $w$ of $e_v$ is left uncovered. If $w$ never had degree $5$, then the special edge $e_w$ will not be deleted from $\cC$ and $w$ will remain covered. If $w$ had degree $5$ and the special edge $e_w$ was deleted in the past, we have just argued in the previous paragraph that $w$ will remain covered.

We iterate this process on the remaining vertices that are covered at least $5$ times, until all vertices are covered at most $4$ times, which completes the proof.
\end{proof}

\end{document}